\newcommand{\cg}{{\mathfrak g}}
\DeclareMathOperator{\id}{id}
\DeclareMathOperator{\Ss}{S}
\DeclareMathOperator{\ad}{ad}
\DeclareMathOperator{\vol}{\nu}
\def\to{\longrightarrow}
\def\eps{\epsilon}
\newcommand{\cD}{{\mathcal D}}
\def\cg{{\mathfrak g}}
\newcommand{\Sph}{\mathbb{S}}
\def\<{\langle}
\def\>{\rangle}
\def \A{{\mathcal{A}}}
\def \D{{\mathcal{D}}}
\def\R{\mathbb{R}}
\newtheorem{The}{Theorem}[section]
\newtheorem{prop}[The]{Proposition}
\newtheorem{example}[The]{Example}
\newtheorem{Lem}[The]{Lemma}
\newtheorem{cor}[The]{Corollary}
\theoremstyle{definition}
\newtheorem{Def}[The]{Definition}
\newtheorem{Rem}[The]{Remark}
\thanks{2010 Mathematical Subject Classification.
 58D05, 35Q35, 53C22, 53C80.}
\newcommand{\arxivonly}[1]{#1}
\newcommand{\paperonly}[1]{}
\begin{document}
\title[Geodesic interpretation of quasi-geostrophic equations]{Geodesic interpretation of the global quasi-geostrophic equations}
\author{Klas Modin and Ali Suri}
\address{Klas Modin, Department of Mathematical Sciences, Chalmers University of
Technology and University of Gothenburg, SE-412 96, Gothenburg,
Sweden}
\email{klas.modin@chalmers.se}
\address{Ali Suri, Universit\"{a}t Paderborn, Warburger Str. 100,
33098 Paderborn, Germany}
\email{asuri@math.upb.de}
\maketitle {\hspace{2.5cm}}

\begin{abstract}
We give an interpretation of the global shallow water quasi-geostrophic equations on the sphere $\Sph^2$ as a geodesic equation on the central extension of the quantomorphism group on $\Sph^3$. The study includes deriving the model as a geodesic equation for a weak Riemannian metric, demonstrating smooth dependence on the initial data, and establishing global-in-time existence and uniqueness of solutions.
We also prove that the Lamb parameter in the model has a stabilizing effect on the dynamics: if it is large enough, the sectional curvature along the trade-wind current is positive, implying conjugate points.

\textbf{Keywords}: quantomorphism group, global quasi-geostrophic equations, central extension, geodesic equations, conjugate points
%%%%%%%%%%%%%%%%%%%%%%%%%%%%%%%%%%%%%%%%%%%%%%%%%%%%%%%%%%%%%%%%%%%%
\end{abstract}

\pagestyle{headings} %\markright{ Full quasi-geostrophic equation}

\arxivonly{\tableofcontents}

\section{Introduction}

The quasi-geostrophic model for atmospheric and oceanographic flows was formulated by Charney in 1949~\cite{Ch1949}. 
Roughly, it is derived as follows.
On a domain where the horizontal length scale is much greater than the vertical, the 3-D Navier-Stokes equations are well approximated by the 2-D shallow-water equations.
In a rotating frame of reference, these equations contain the Coriolis force, whose strength compared to the inertia of a typical horizontal velocity gives rise to the dimensionless Rossby number.
In the limit of small Rossby numbers, the waves due to gravity become highly oscillatory and can be filtered out.
This sequence of approximations leads to the quasi-geostrophic equations.
They take the form of an incompressible, inviscid 2-D fluid, expressed in the potential vorticity function $q$ as
\begin{subequations}\label{eq:original_qgs}
\begin{equation} \label{eq:transporteq}
    \partial_tq + \{\psi,q\} = 0,
\end{equation}
with a relation between $q$ and the stream function $\psi$ to be discussed next.

Charney considered the quasi-geostrophic model on a strip located at an approximately constant latitude.
This so called ``$\beta$-plane approximation'' allows the Coriolis parameter to be varied linearly in space, which in turn leads to a relation between potential vorticity and stream function given by
\begin{equation} \label{eq:beta_approx}
    q = \Delta\psi + \beta z + h
\end{equation}
\end{subequations}
where $z$ denotes the (linearized) latitude and the function $h$ represents the bottom floor topography.

For a global fluid on the entire sphere $\Sph^2$, the tacit assumption has often been to take as quasi-geostrophic approximation the same equations~\eqref{eq:original_qgs} with $z=\cos\vartheta$ for the latitudinal angle $\vartheta \in [-\pi,\pi]$.
However, Charney's basic assumptions in the $\beta$-plane approximation then fails.
A correction was given by Schubert and Silvers~\cite{Schubert}, Verkley~\cite{Verkley}, and also via a different method by Luesink et~al.~\cite{Luesink-2024}, which led to the \emph{global quasi-geostrophic equations}
\begin{equation}\label{eq-SWQG}
      \partial_tq + \{\psi,q\} = 0, \qquad q = (\Delta - \gamma z^2)\psi + \frac{2z}{{\rm Ro}} + 2z h .
\end{equation}
% Equation \eqref{eq-SWQG} was derived by Schubert, Verkley  in 2009  \cite{Schubert, Verkley} and also by a different method  by  Luesink et al.  in 2024 \cite{Luesink-2024}. 
Here, $\gamma$ is the dimensionless Lamb parameter defined as 
\[
\gamma = \frac{4\Omega^2 a^2}{g H} = \frac{r^2}{L^2{\rm Bu}},
\]
where $r$ is the radius of the sphere, $\Omega$ is the rotation frequency, $g$ is the gravitational acceleration, $H$ is the typical thickness of the fluid layer, and $L$ is a characteristic length scale. 
Moreover, the constant  $\mathrm{Bu}$ represents the Burger number, defined as $\mathrm{Bu} = \left(\frac{\mathrm{Ro}}{\mathrm{Fr}}\right)^2$, where $\mathrm{Ro}$ is the Rossby number, representing the ratio between the typical horizontal velocity and the velocity induced by the rotation of the sphere, while $\mathrm{Fr}$ is the Froude number, representing the ratio between the typical horizontal velocity and the speed of the fastest gravity wave. 
The Burger number quantifies the relative dominance of rotation and stratification in the fluid dynamics of the system.

In this paper we carry out a geometrical analysis of the global quasi-geostrophic equations~\eqref{eq-SWQG}, with the following results.

\begin{enumerate}
    \item In section~\ref{sec:geometry}, we show that the equations can be cast as geodesic equations on the infinite-dimensional group of quantomorphisms on~$\mathbb{S}^3$.
    This result is analogous to Arnold's~\cite{Arnold66} discovery that the incompressible Euler equations can be interpreted as geodesic equations on the group of volume-preserving diffeomorphisms.
    \item In section~\ref{sec:analysis}, we use the geodesic interpretation to give local and global well-posedness results.
    These results are analogous to Ebin and Marsden's~\cite{Ebin-Marsden} local analysis based on Arnold's discovery, and to Ebin's~\cite{Ebin1} global well-posedness result for symplectomorphism groups.
    \item In section~\ref{sec:curvature}, we study Misio{\l }ek's criterion for positivity on the sectional curvature and we show that the Lamb parameter $\gamma$ has a stabilizing effect on the dynamics, in the sense that if it is large enough the sectional curvature becomes positive.
\end{enumerate}

These results are directly linked to the work of Ebin and Preston~\cite{Ebin-Preston} and Lee and Preston~\cite{Lee-Pre} on the analysis of the equations \eqref{eq:original_qgs}.
Relative to their work, the main complication for the equations~\eqref{eq-SWQG} arises from the non-isotropy of the infinite-dimensional Riemannian metric due to the term $\gamma z^2$ in the relation between the stream function $\psi$ and the potential vorticity~$q$.

\medskip

\noindent\textbf{Acknowledgements}.
The first author was supported by the Swedish Research Council (grant number 2022-03453) and the Knut and Alice Wallenberg Foundation (grant numbers WAF2019.0201).
The second author was supported by the Deutsche Forschungsgemeinschaft (DFG, German Research Foundation), project 517512794.

\section{Geodesic interpretation} \label{sec:geometry}
\subsection{The Hopf fibration} 
We give here a summary of the Hopf fibration $\pi : \Sph^3 \to \Sph^2$, which is a nontrivial circle bundle over $\Sph^2$. 

A point on $\Sph^3$ can be represented by two complex numbers $w_1 $ and $w_2$ satisfying $|w_1 |^2 + |w_2|^2 = 1$. 
We shall use the Hopf coordinates, where $w_1 $ and $w_2$ are expressed as
\[
w_1 = \cos\eta e^{i\xi_1} , \quad w_2 = \sin\eta e^{i\xi_2} ,
\]
with $\eta \in (0, \pi/2)$ and $\xi_1, \xi_2 \in (0, 2\pi)$.
The Euclidean metric on $\mathbb{C}^2$ induces the standard metric on $\Sph^3$, given by
\begin{equation}\label{standard metric 1}
g^{\Sph^3} = \cos^2\eta \, d\xi_1^2 + \sin^2\eta \, d\xi_2^2 + d\eta^2.
\end{equation}
Set $\lambda = 2\eta$, $\zeta = \xi_1 - \xi_2$, and $\chi=\frac{1}{2}(\xi_1+\xi_2)$. 
Then the standard metric \eqref{standard metric 1} on $\Sph^3$ can be written as
\begin{equation}\label{standard metric 2}
g^{\Sph^3}=\frac{1}{4} \big( d\lambda^2 +\sin^2\lambda  d\zeta^2\Big) + \big(d\chi  - \frac{1}{2} \cos\lambda  d\zeta\big)^2  .
\end{equation}
Note that
\[
g^{\Sph^2} = \frac{1}{4} \left( d\lambda^2 + \sin^2\lambda \, d\zeta^2 \right),
\]
is the metric on $\Sph^2$ of radius $\frac{1}{2}$.

In this formulation, the projection $\pi:\Sph^3 \to \Sph^2$  maps the point $(w_1,w_2)= ( \cos\eta e^{i\xi_1} ,  \sin \eta e^{i\xi_2})$ to
\[
\big( \bar{w_1}w_2, \frac{1}{2}(|w_1|^2-|w_2|^2)\big) =  \frac{1}{2}  \big( \sin(\lambda  ) \cos(\xi_1-\xi_2) ,  \sin(\lambda) \sin(\xi_1-\xi_2)  ,\cos(\lambda)  \big).
\]
As we can see from \eqref{standard metric 1} and \eqref{standard metric 2}, the surjection $\pi$ is a Riemannian submersion.

The circle action of $\Sph^1$ on $\Sph^3$ is
\[
e^{it}(e^{i\xi_1} \cos\eta, e^{i\xi_2} \sin \eta)=(e^{i(\xi_1+t)} \cos\eta, e^{i(\xi_2+t)} \sin \eta).
\]
Indeed, this action generates the fibers of the Hopf fibration, and the corresponding Reeb field is
\[
\partial_{\xi_1} +\partial_{\xi_2}.
\]
This is a Killing vector field for the metric \eqref{standard metric 1}, as it preserves the Riemannian metric on $\Sph^3$ and is equivariant under the symmetry of the $\Sph^1$-action.

On the other hand, $\Sph^3$ is naturally diffeomorphic to $SU(2)$, the group of unitary $2 \times 2$ matrices with determinant 1. 
The isometry between the Lie group $SU(2)$ and $\Sph^3$ is expressed as
\[
F:SU(2)\to \Sph^3\quad;\quad \begin{pmatrix}
       w_1  &       w_2  \\
-\bar{w_2}  &  \bar{w_1} \end{pmatrix}
\longmapsto (w_1,w_2).
\]
A basis for the Lie algebra of left invariant vector fields on $SU(2)$ is given by orthogonal matrices
\begin{equation*}
E:=E_1=\begin{pmatrix}
i  &  0  \\
0  &  -i
\end{pmatrix}, \quad
E_2=\begin{pmatrix}
0  &  1  \\
-1  &  0
\end{pmatrix},\quad
E_3=\begin{pmatrix}
0  &  i  \\
i  &  0
\end{pmatrix}
\end{equation*}
for $\mathfrak{su}(2)$. 
The differential $dF$ maps $B:=\{E_1,E_2,E_3\}$ to $\{\partial_{\xi_1} +\partial_{\xi_2} , \partial_\eta, \partial_{\xi_1} - \partial_{\xi_2}\}$.  
%
%%%%%%%%%%%%%%%%%%%%%%%%%%%%%%%%%%     quantomorphism group     %%%%%%%%%%%%%%%%%%%%%%%%%%%%%%%%%%%%%%%%%%
%
\subsection{The quantomorphism group}    
Consider $\Sph^3$ with the standard round metric \eqref{standard metric 1}. 
As we have seen, the Hopf fibration with this metric  is a Riemannian submersion. 
It is also a Boothby-Wang fibration, in the sense that $\pi^*\omega = d\theta_1$, where $\{\theta_i\}_{i=1,2,3}$ forms the dual basis to the vector fields $\{E_i\}_{i=1,2,3}$. 
For $\theta_1$ we need that $\theta_1\wedge d\theta_1$ coincides with the volume form $\mu$ generated by the metric $g^{\Sph^3}$.
Then the symplectic quotient of $\Sph^3$ via the circle action is $\Sph^2$. 
From here on we set $\theta:=\theta_1$. 

\begin{Def}
The  quantomorphism group of Sobolev class $s$ is defined by
\[
\cD_q^s(\Sph^3)=\{ \varphi\in\Sph^3\longrightarrow\Sph^3\mid \textrm{ $\varphi$  is an $H^s$ diffeomorphism and } \varphi^*\theta=\theta \}.
\]
\end{Def}
\cite[Cor.~2.7]{Ebin-Preston-arXiv} implies that if $s>\frac{5}{2}$ then $\cD_q^s(\Sph^3)$ is a $C^\infty$ submanifold of the space $\cD_{E,\mu}^s(\Sph^3)$ of all $H^s$ volume-preserving diffeomorphisms that commute with the flow of the Reeb vector field. 
Set 
\[
H_{E}^{s+1}(\Sph^3,\R)=\{ f\mid f:\Sph^3\longrightarrow\R    \textrm{  is $H^{s+1}$  and } Ef=0 \}.
\]
Then we have
\[
\cg:=T_{\id}\cD_q^s(\Sph^3)=\{ \Ss_{\theta} f \mid f:\Sph^3\longrightarrow\R ,\;  f\in H_{E}^{s+1}(\Sph^3,\R) \}
\]
where $\Ss_{\theta}f=u$ if and only if $\theta(u)=f$ and $i_ud\theta=-df$ (for more details, see \cite{Ebin-Preston-arXiv}). 
As derived in \cite[p.~20]{Ebin-Preston-arXiv}, the operator $S_{\theta}$ is given by
\begin{equation}\label{S theta}
\Ss_{\theta}f=fE-\frac{1}{2}(E_3f)E_2 +\frac{1}{2}(E_2f)E_3.    
\end{equation}
For any $x \in \Sph^3$, there exists an isomorphism between $\textrm{Ann}(E) \subset T^*_x\Sph^3$ and $\textrm{ker}(\theta) \subset T_x\Sph^3$, determined by $\Gamma(\theta_2) = -E_3$ and $\Gamma(\theta_3) = E_2$.

In the sequel, suppose that \[\rho:\Sph^2\to \R\] is a function in $H^{s}$. 
For convenience, we will use the same symbol $\rho$  to denote the lifted  function $\rho\circ\pi:\Sph^3\longrightarrow \R$. 
Let $\mathcal{A}:\mathfrak{X}(\Sph^3) \to \mathfrak{X}(\Sph^3)$ be the operator which acts on vector fields by
\begin{eqnarray*}
% \mathcal{A}:\mathfrak{X}(\Sph^3)  &\longrightarrow & \mathfrak{X}(\Sph^3)\\
\mathcal{A}: u &\longmapsto & \rho^2 g^{\Sph^3}(u,E)E + g^{\Sph^3}(u,E_2)E_2 + g^{\Sph^3}(u,E_3)E_3 .
\end{eqnarray*}
Then $\mathcal{A}$ induces the positive definite inner product $\langle \; , \; \rangle^\A:\cg\times\cg\longrightarrow \R $ given by
\begin{eqnarray}\label{metric A}
\langle S_{\theta}{f}  ,  S_{\theta}{g} \rangle^{\A}      &=&  \langle f\rho^2 E-\frac{1}{2}(E_3f)E_2 +\frac{1}{2}(E_2f)E_3  ~,~  S_{\theta}{g} \rangle \nonumber\\
&=&  \langle f\rho^2 E-\frac{1}{2}\Gamma(df) ~,~  S_{\theta}{g} \rangle
\end{eqnarray}
%
%%%%%%%%%%%%%%%%%%%%%%%%%%%%%%%%%%%%%%%%%%             Remark         %%%%%%%%%%%%%%%%%%%%%%%%%%%%%%%%%%%%%%%%%%%%
%
\begin{Rem}
The principal $\Sph^1$-bundle  $\pi:\Sph^3\longrightarrow\Sph^2$ admits an $\mathfrak{s}^1 = T_e \Sph^1$-valued 1-form $\omega$ (see, for example, \cite[Cor.~6.11]{Lichtenfelz}). 
Using the connection form $\omega$ and following the formulation  \eqref{standard metric 2}, the Berger metric can be written  as
\begin{equation*}
g_{\alpha}^{\Sph^3}=\pi^*g^{\Sph^2} +\alpha^2\omega^*(dt)^2    
\end{equation*}
Practically, if we identify $\mathfrak{s}^1$ with $\R$, then for any $v, w \in T_p\Sph^3$, $\omega^*(dt)^2$ is given by $\omega_p(v)\omega_p(w)$. %, where '$\cdot$' denotes the usual multiplication on the real line. 
A class of non-standard metrics on the total  space $\Sph^3$ can be defined using a differentiable function 
\[
\rho:\Sph^2\to \R
\]
via 
\begin{equation}\label{non-standard metric 1}
g_{\rho}^{\Sph^3}=\pi^*g^{\Sph^2} +(\rho\circ\pi)\omega^*(dt)^2    
\end{equation}
For example, see \cite[p.~270]{Cabrerizo}. 
Equivalently, we can write
\begin{equation}\label{non-standard metric 2}
g^{\Sph^3}_{\rho}=\frac{1}{4} \big( d\lambda^2 +\sin^2\lambda  d\phi^2\Big) +   \rho(\lambda,\phi)\big(d\chi  - \frac{1}{2} \cos\lambda  d\zeta\big)^2 . 
\end{equation}
Depending on the range of $\rho$, the metric \eqref{non-standard metric 1} could be Lorenzian, semi-Riemannian or Riemannian. 
The Reeb field $E$ with respect to the metric~\eqref{non-standard metric 1} is a Killing vector field. 
This metric inspires the definition of the operator $\mathcal{A}$ in the weak inner product \eqref{metric A}.
\end{Rem}

In the next step, we compute the adjoint of the operator $S_{\theta}$ with respect to the $L^2$-type metric \eqref{metric A}.
%
%%%%%%%%%%%%%%%%%%%%%%%%%%%%%%%%%%%%%%%    proposition  adjoint     %%%%%%%%%%%%%%%%%%%%%%%%%%%%%%%%%%%%%%%%
%
\begin{prop}
The adjoint of the operator $\Ss_{\theta}$ with respect to the weak metric \eqref{metric A} exists and is given by
\begin{eqnarray*}
\Ss_{\theta,\A}^*: T_{\id}\cD_q^s(\Sph^3)  &\longrightarrow &  H_{E}^{s-1}(\Sph^3,\R)\\
aE+bE_2+cE_3  & \longmapsto &  \rho^2 a +\frac{1}{2}(E_3b) -\frac{1}{2}(E_2c)
\end{eqnarray*}
\end{prop}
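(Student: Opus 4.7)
The plan is to verify the defining identity of the adjoint,
\[
\langle S_\theta f, u\rangle^\A = \int_{\Sph^3} f\cdot (S^*_{\theta,\A} u)\, d\mu,
\]
for every $f \in H^{s+1}_E(\Sph^3,\R)$ and $u \in T_{\id}\cD_q^s(\Sph^3)$, by a direct computation followed by integration by parts, and then to check that the resulting formula actually lands in $H^{s-1}_E$. Since $\langle\,,\,\rangle^\A$ is only a weak inner product, existence of the adjoint is not automatic: the formula must first be produced and then its range verified.

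First I would expand the left-hand side using the identity $\mathcal{A}(S_\theta f) = \rho^2 fE - \tfrac{1}{2}(E_3 f)E_2 + \tfrac{1}{2}(E_2 f)E_3$ extracted from \eqref{metric A}. Writing $u = aE + bE_2 + cE_3$ and using that $\{E, E_2, E_3\}$ is orthonormal for $g^{\Sph^3}$, the pairing reduces to
\[
\int_{\Sph^3}\Big[\rho^2 af - \tfrac{1}{2}(E_3 f)b + \tfrac{1}{2}(E_2 f)c\Big]\, d\mu.
\]
Next I would integrate by parts. Because $E$, $E_2$, $E_3$ are left-invariant vector fields on $SU(2)\cong\Sph^3$ and $\mu$ is the bi-invariant Haar volume, each $E_i$ is divergence-free, so derivatives can be moved from $f$ onto $b$ and $c$ with a change of sign. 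This yields
\[
\langle S_\theta f, u\rangle^\A = \int_{\Sph^3} f\Big[\rho^2 a + \tfrac{1}{2}(E_3 b) - \tfrac{1}{2}(E_2 c)\Big]\, d\mu,
\]
which identifies the claimed formula for $S^*_{\theta,\A} u$.

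Finally I would verify that $S^*_{\theta,\A} u \in H^{s-1}_E$. The Sobolev regularity is immediate, since only one derivative is applied to the $H^s$ coefficients. For the $E$-invariance I would exploit the assumption $u \in T_{\id}\cD_q^s(\Sph^3)$, i.e.\ $u = S_\theta f$ for some $f \in H^{s+1}_E$, so that $a=f$, $b=-\tfrac{1}{2}E_3 f$, $c=\tfrac{1}{2}E_2 f$, and
\[
S^*_{\theta,\A}(S_\theta f) = \rho^2 f - \tfrac{1}{4}(E_2^2+E_3^2)f.
\]
The first term is annihilated by $E$ because $\rho$ is pulled back from $\Sph^2$ and $Ef = 0$. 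For the second, the cleanest route is to observe that $E_1^2 + E_2^2 + E_3^2$ is the Casimir of $\mathfrak{su}(2)$ and therefore commutes with $E = E_1$, so $[E, E_2^2 + E_3^2]=0$ and hence $E(E_2^2+E_3^2)f = (E_2^2+E_3^2)Ef = 0$. This $E$-invariance step is the main subtlety: the integration by parts is routine, but without the Casimir observation one could not conclude that the naive formula defines an operator into the subspace $H^{s-1}_E$ named in the statement.
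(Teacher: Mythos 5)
Your proposal is correct, and its first half --- expanding $\langle S_\theta f, u\rangle^\A$ via the orthonormal frame $\{E,E_2,E_3\}$ and integrating by parts using that left-invariant fields are divergence-free for the bi-invariant volume --- is exactly the paper's computation leading to \eqref{S theta A star}. Where you genuinely diverge is in verifying that the output lies in $H_E^{s-1}$. The paper proves $L_E(\Ss_{\theta,\A}^*w)=0$ for \emph{any} $E$-invariant $w$ by a duality argument: pairing against an arbitrary test function $h$, moving $L_E$ across the integral using that $E$ is Killing, and using the commutations $L_E\Ss_\theta=\Ss_\theta L_E$ and $L_E\A=\A L_E$. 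You instead use that every element of $T_{\id}\cD_q^s(\Sph^3)$ is of the form $S_\theta f$ with $Ef=0$, compute $\Ss_{\theta,\A}^*(S_\theta f)=\rho^2 f-\tfrac14(E_2^2+E_3^2)f$ explicitly, and invoke the Casimir of $\mathfrak{su}(2)$ to get $[E,E_2^2+E_3^2]=0$ (which indeed follows from $[E_1,E_2]=2E_3$, $[E_1,E_3]=-2E_2$). Both arguments are valid for the proposition as stated, since its domain is exactly the image of $S_\theta$. Your route is more concrete and has the side benefit of already producing the contact Laplacian $\Delta_{\theta,\A}=\rho^2-\tfrac14(E_2^2+E_3^2)$ that the paper derives in the following proposition; the paper's route is more structural and establishes $E$-invariance of $\Ss_{\theta,\A}^*w$ for every $E$-invariant $w$, not only those in the range of $S_\theta$, which is the natural level of generality if one wants the adjoint defined on all of $T_{\id}\cD_{E,\mu}^s(\Sph^3)$. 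The one point worth being explicit about in your write-up is $E(\rho^2 f)=0$, which you correctly justify from $\rho$ being a pullback from $\Sph^2$ together with $Ef=0$.
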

\begin{proof}
For any $w=aE+bE_2+cE_3\in T_{\id}\cD_q^s(\Sph^3)$ and any $f\in  H_{E}^{s+1}(\Sph^3,\R)$ we have
\begin{eqnarray*}
\langle S_{\theta}{f}  ,  w \rangle^{\A}    &=& \int_{\Sph^3} g^{\Sph^3}(\A(\Ss_{\theta}{f} ) ,  w) d\mu\\
&=&  \int_{\Sph^3}  g^{\Sph^3}\big(  f\rho^2 E-\frac{1}{2}(E_3f)E_2 +\frac{1}{2}(E_2f)E_3  ~,~  aE+bE_2+cE_3   \big)d\mu\\
&=&  \int_{\Sph^3}  \Big(  f\rho^2 a -\frac{1}{2}(E_3f)b +\frac{1}{2}(E_2f)c   \Big)d\mu\\
&=&    \int_{\Sph^3}  f \Big(  \rho^2 a +\frac{1}{2}(E_3b) -\frac{1}{2}(E_2c)   \Big)d\mu
\end{eqnarray*}
which implies that 
\begin{equation}\label{S theta A star}
\Ss_{\theta,\A}^*(aE+bE_2+cE_3)=\rho^2 a +\frac{1}{2}(E_3b) -\frac{1}{2}(E_2c).
\end{equation}
In the next step, we show that  if $[E,w]=0$, then 
\[
E\big( \Ss_{\theta,\A}^*w\big) = 0
\]
which implies that $\Ss_{\theta,\A}^*w$ belongs to $H_{E}^{s-1}(\Sph^3,\R)$. 
Suppose $h:\Sph^3\longrightarrow\R$ is a differentiable function. 
Since $E$ is a Killing vector field for the metric $g^{\Sph^3}$ and $L_{E}\mathcal{A}=\mathcal{A}L_{E}$ we get
\begin{eqnarray*}
\int_{\Sph^3} h L_{E} \big( \Ss_{\theta,\A}^*w\big) d\mu   &=&  -\int_{\Sph^3} \big( \Ss_{\theta,\A}^*w\big) L_Eh   d\mu \\
&=&    -\int_{\Sph^3} g^{\Sph^3} \big( \Ss_{\theta}L_Eh , \mathcal{A}w\big)  d\mu\\
&=&     \int_{\Sph^3} - g^{\Sph^3} \big( L_E \Ss_{\theta}h , \mathcal{A}w\big)  d\mu \\
&=&     \int_{\Sph^3} - g^{\Sph^3} \big( L_E \Ss_{\theta}h , \mathcal{A}w\big) +Eg^{\Sph^3} \big(  \Ss_{\theta}h , \mathcal{A}w\big)  d\mu \\
&=&       \int_{\Sph^3}  g^{\Sph^3} \big(  \Ss_{\theta}h , L_E\mathcal{A}w\big)d\mu\\
&=&       \int_{\Sph^3}  g^{\Sph^3} \big(  \Ss_{\theta}h , \mathcal{A}L_Ew\big)d\mu=0.
\end{eqnarray*}
Since the function $h$ is arbitrary, we conclude that $L_{E} \big( \Ss_{\theta,\A}^*w\big) = 0$ for any $E$-invariant vector field $w$.
\end{proof}
Now, we define the contact Laplacian by
\[
\Delta_{\theta,\A}: H_{E}^{s+1}(\Sph^3,\R)  \longrightarrow   H_{E}^{s-1}(\Sph^3,\R)\quad ; \quad f \longmapsto   S_{\theta,\A}^* S_{\theta} f
\]
\begin{prop}
The contact Laplacian is given by
\begin{eqnarray*}
\Delta_{\theta,\A}: H_{E}^{s+1}(\Sph^3,\R)  &\longrightarrow &  H_{E}^{s-1}(\Sph^3,\R)\\
f  & \longmapsto & (\rho^2   - \Delta) f 
\end{eqnarray*}
where $\Delta$ denote the Laplacian on the sphere $\Sph^2$ lifted to $\Sph^3$.
Moreover $\Delta_{\theta,\A}$ reduces to an invertible elliptic operator on $\Sph^2$.
\end{prop}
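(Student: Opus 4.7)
The plan is to compute $\Delta_{\theta,\A} f = S_{\theta,\A}^* S_{\theta} f$ directly using the explicit formulas for both operators, then to recognize the resulting second-order operator geometrically, and finally to invoke standard elliptic theory for invertibility. Starting with the composition: inserting $S_{\theta} f = fE - \tfrac{1}{2}(E_3 f) E_2 + \tfrac{1}{2}(E_2 f) E_3$ from \eqref{S theta} into \eqref{S theta A star} with $a = f$, $b = -\tfrac{1}{2}E_3 f$, $c = \tfrac{1}{2}E_2 f$ gives at once
\[
\Delta_{\theta,\A} f = \rho^2 f - \tfrac{1}{4}(E_2^2 + E_3^2) f.
\]

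Next I would identify $\tfrac{1}{4}(E_2^2+E_3^2)$ with the lifted $\Sph^2$-Laplacian on $H_E^{s+1}(\Sph^3,\R)$. The triple $\{E_1, E_2, E_3\}$ is a global orthonormal frame of left-invariant (hence Killing and divergence-free) vector fields on $\Sph^3 \cong SU(2)$ satisfying $\nabla_{E_i} E_i = \tfrac{1}{2}[E_i, E_i] = 0$ for the bi-invariant round metric, so the Laplace--Beltrami operator factors as $\Delta_{\Sph^3} = E_1^2+E_2^2+E_3^2$. For $E$-invariant $f$ the first summand vanishes, leaving $\Delta_{\Sph^3} f = (E_2^2+E_3^2)f$. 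Since $\pi : \Sph^3 \to \Sph^2$ is a Riemannian submersion onto the sphere of radius $1/2$ (as read off from \eqref{standard metric 2}), the Laplacian of a basic function equals that of its base representative, and the rescaling to the unit sphere accounts exactly for the factor $4$, so $\tfrac{1}{4}(E_2^2+E_3^2) f = \Delta f$ with $\Delta$ the standard Laplacian on $\Sph^2$.

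For the second assertion I would note that because $\rho$ and $\Delta$ are $E$-invariant, $\rho^2 - \Delta$ descends to a map $H^{s+1}(\Sph^2,\R) \to H^{s-1}(\Sph^2,\R)$. It is formally self-adjoint, strictly elliptic with principal symbol $|\xi|^2$, and its associated quadratic form
\[
\int_{\Sph^2} \bigl(\rho^2 f^2 + |\nabla f|^2\bigr)\,d\mu
\]
is nonnegative; strict positivity, and hence invertibility, holds as long as $\rho \not\equiv 0$, since vanishing of the form forces $f$ to be constant and $\rho f \equiv 0$. Elliptic regularity on the closed manifold $\Sph^2$ then produces the claimed isomorphism of Sobolev spaces. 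The main delicate point is the identification $\tfrac{1}{4}(E_2^2+E_3^2) = \Delta$, which rests simultaneously on orthonormality of the $E_i$, bi-invariance ensuring $\Delta_{\Sph^3} = \sum E_i^2$ with no lower-order corrections, and a careful tracking of the radius-$1/2$ rescaling of the base metric; once this is in hand, the remainder is routine from the setup already developed.
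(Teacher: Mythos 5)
Your proposal is correct and follows essentially the same route as the paper: the same direct substitution of \eqref{S theta} into \eqref{S theta A star} yielding $\rho^2 f - \tfrac{1}{4}(E_2^2+E_3^2)f$, followed by positivity of the quadratic form $\int_{\Sph^2}(\rho^2 f^2 + |\nabla f|^2)\,d\nu$ and ellipticity to get invertibility. The only difference is that you justify the identification of $\tfrac{1}{4}(E_2^2+E_3^2)$ with the lifted Laplacian (via $\Delta_{\Sph^3}=\sum E_i^2$ and the Riemannian submersion), a step the paper simply asserts, and you correctly flag the hypothesis $\rho\not\equiv 0$ needed to exclude constants from the kernel.
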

\begin{proof}
For any $h\in H_{E}^{s+1}(\Sph^3,\R)$, we observe that
\begin{eqnarray*}
\Delta_{\theta,\A} f   &=&  S_{\theta,\A}^* S_{\theta} f     \\
&=&   S_{\theta,\A}^*  \Big(  f  E-\frac{1}{2}(E_3f)E_2 +\frac{1}{2}(E_2f)E_3\Big)  \\
&=&  f \rho^2   - \frac{1}{4}  E_3^2f - \frac{1}{4}E_2^2f = f \rho^2   - \Delta f
\end{eqnarray*}    
where $\Delta$ denotes the Laplacian on the two-dimensional sphere with radius $\frac{1}{2}$. Moreover,
\begin{eqnarray*}
\langle \A \Ss_{\theta} f , \Ss_{\theta} f\rangle   &=&  \int_{\Sph^3}     f\Delta_{\theta,\A} f d\mu        \\
&=&   \int_{\Sph^2}      ( f^2 \rho^2  - f\Delta f ) d\nu   \\
&=&   \int_{\Sph^2}      ( f^2 \rho^2  + |\nabla f|^2 ) d\nu\geq 0
\end{eqnarray*}    
and 
\[
\int_{\Sph^2}      ( f^2 \rho^2  + |\nabla f|^2 ) d\nu= 0 
\]
if and only if $f=0$. As a result, if we reduce the contact Laplacian to $\Sph^2$, we obtain
\[
\Delta_{\theta,\A}:H^{s+1}(\Sph^2,\R) \longrightarrow H^{s-1}(\Sph^2,\R)\quad;\quad f\longmapsto \rho^2f- \Delta f
\]
which is clearly elliptic.
\end{proof}
\begin{Rem}
The invertibility of the operator $\Delta_{\theta,\A} = \rho^2 - \Delta$ can also be proved using the strong maximum principle, see for example \cite[Cor.~2.11]{Kazdan}. 
Furthermore, this type of Schr\"{o}dinger operator has been studied by Gurarie~\cite{Guraie}.
\end{Rem}
%
%%%%%%%%%%%%%%%%%%%%%%%%%%%%%%%%%%%%%%%%           Lemma          %%%%%%%%%%%%%%%%%%%%%%%%%%%%%%%%%%%%%%%%%%%%%%%%
%
%
\begin{Lem}
Let $f:\Sph^3\longrightarrow\R$  be $E$-invariant. 
Moreover, suppose $d\nu$ is the volume form for the standard metric on $\Sph^2$ with radius $\frac{1}{2}$. 
Then, for $\tilde{f}:\Sph^2\longrightarrow\R$ defined by $\tilde{f}(\pi(x))=f(x)$, we have $\int_{\Sph^3}fd\mu = 2\pi\int_{\Sph^2} \tilde{f}  d\nu$.  
\end{Lem}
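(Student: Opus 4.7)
The plan is to view the Hopf projection $\pi\colon\Sph^3\to\Sph^2$ as a Riemannian submersion with circle fibers of length $2\pi$, and then apply Fubini.

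First I would read off the two relevant volume forms from the decomposition \eqref{standard metric 2}. The orthonormal coframe $\bigl(\tfrac12 d\lambda,\ \tfrac12\sin\lambda\,d\zeta,\ d\chi-\tfrac12\cos\lambda\,d\zeta\bigr)$ immediately yields
\[
d\mu=\tfrac14\sin\lambda\,d\lambda\wedge d\zeta\wedge d\chi,\qquad d\nu=\tfrac14\sin\lambda\,d\lambda\wedge d\zeta,
\]
so that locally $d\mu=\pi^*d\nu\wedge d\chi$. Moreover, since $\chi=\tfrac12(\xi_1+\xi_2)$ and $E=\partial_{\xi_1}+\partial_{\xi_2}$, one has $E=\partial_\chi$ in these coordinates.

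Next, the $E$-invariance of $f$ translates into $\partial_\chi f=0$, so $f$ depends only on $(\lambda,\zeta)$ and descends to the function $\tilde f$ on $\Sph^2$. Fubini then gives
\[
\int_{\Sph^3}f\,d\mu=\int_{\Sph^2}\tilde f\!\left(\int_0^{2\pi}d\chi\right)d\nu=2\pi\int_{\Sph^2}\tilde f\,d\nu,
\]
where the factor $2\pi$ is the arc length of each Reeb orbit, since $|E|_{g^{\Sph^3}}=1$ and the orbit closes up after parameter $2\pi$.

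The only mildly delicate point is that the Hopf bundle is nontrivial, so the chart $(\lambda,\zeta,\chi)$ covers only an open dense subset of $\Sph^3$; however, the complement has measure zero, so it does not affect the computation. I do not anticipate any real obstacle, since the claim is essentially the coarea formula for a principal circle bundle with fibers of length $2\pi$, and a sanity check with the constant function $f\equiv 1$ correctly recovers $\vol(\Sph^3)=2\pi\cdot\vol(\Sph^2_{1/2})=2\pi\cdot\pi=2\pi^2$.
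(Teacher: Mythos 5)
Your proof is correct and follows essentially the same route as the paper's: both write the volume form in the Hopf coordinates $(\lambda,\zeta,\chi)$ so that $d\mu=\pi^*d\nu\wedge d\chi$ and integrate out the fiber variable to produce the factor $2\pi$. If anything, your coarea/orthonormal-coframe phrasing handles the coordinate ranges more cleanly than the paper's explicit change of variables (which integrates $\zeta$ over $(-2\pi,2\pi)$ and leaves the resulting double cover implicit), and your sanity check $\vol(\Sph^3)=2\pi\cdot\vol(\Sph^2_{1/2})$ confirms the normalization.
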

\begin{proof}
Consider the coordinates $\lambda=2\eta$, $\chi=\frac{\xi_1+\xi_2}{2}$ and $\zeta=\xi_1-\xi_2$. Then $0\leq\lambda\leq \pi$, $0\leq\chi\leq2\pi$ and $-2\pi\leq\zeta\leq2\pi$. Moreover the volume form on $\Sph^3$ with respect to the standard metric \eqref{standard metric 1} is 
\[d\mu=-\sin\eta\cos\eta d\xi_1\wedge d\xi_2 \wedge d\eta\]
and the $E$-invariant functions are functions of $\lambda$ and $\zeta$. As a result we get
\begin{eqnarray*}
\int_{\Sph^3} fd\mu &=& -\int_0^{\frac{\pi}{2}} \int_0^{2\pi}\int_0^{2\pi}f(\xi_1,\xi_2,\eta)\sin\eta\cos\eta d\xi_1\wedge d\xi_2 \wedge d\eta\\
&=& 2\pi \int_0^{\pi} \int_{-2\pi}^{2\pi} \tilde{f}(\lambda,\zeta)\frac{1}{4}\sin\lambda d\zeta\wedge d\lambda  
= 2\pi \int_{\Sph^2}\tilde{f} d\nu .
\end{eqnarray*}
\end{proof}
The fibration $\pi:\Sph^3\longrightarrow\Sph^2$ induces the projection 
\begin{eqnarray}\label{Projection}
\Pi:\cD^s_q(\Sph^3)&\longrightarrow & \cD^s_{\nu}(\Sph^2)\\
\varphi &\longmapsto& \Big(\tilde{\varphi}(\pi(x))=\pi\circ\varphi(x)\Big)\nonumber
\end{eqnarray}
where 
\[
\cD^s_{\nu}(\Sph^2)=
\{ \varphi\in\Sph^2\longrightarrow\Sph^2; \textrm{ $\varphi$  is a $H^s$ symplectic diffeomorphism }  \}.
\]
Since every quantomorphism is $E$-invariant, the map $\Pi$ is well-defined. 
Following Ebin and Preston~\cite[Thm.~4.4]{Ebin-Preston-arXiv}, we propose next a metric on the base space $\cD^s_{\nu}(\Sph^2)$ that makes $\Pi$ a Riemannian submersion.

We first note that  $\Delta_{\theta ,\mathcal{A}}1=\rho^2$. 
For the Hamiltonian vector fields $u=\nabla^\perp\tilde{f}$ and $v=\nabla^\perp\tilde{g}$ in $\tilde{\cg}:=T_e\cD^s_{\nu}(\Sph^2)$ we then define the (weak) inner product
\begin{eqnarray}\label{metric-Ham}
    \llangle u,v\rrangle = 2\pi\int_{\Sph^2}\tilde{f} \big((\rho^2-\Delta)\tilde{g}\big) d\nu  
    -  2\pi\frac{    \int_{\Sph^2}\tilde{f} \rho^2 d\nu   \int_{\Sph^2}\tilde{g} \rho^2 d\nu    }{    \int_{\Sph^2} \rho^2 d\nu    } .
\end{eqnarray}
This inner product is extended to a right-invariant metric on  $\cD^s_{\nu}(\Sph^2)$. 
Here, ``weak'' means that the topology induced by this metric is weaker than the $H^s$ topology. 

The following result is a modification of \cite[Thm~4.4 and Prop.~4.5]{Ebin-Preston-arXiv}.
%
%
%%%%%%%%%%%%%%%%%%%%%%%%%%%%%%%%%%%%%%%%%%%%%         begin theorem        %%%%%%%%%%%%%%%%%%%%%%%%%%%%%%%%%%%%%%%
%
%
\begin{The}\phantom{hej}
\begin{enumerate}[i.]
    \item     The map $\Pi : (\cD^s_{\textrm{q}}(\Sph^3), \langle \;,\;\rangle^\A)\longrightarrow (\cD^s_{\nu}(\Sph^2),\llangle\;,\;\rrangle)$ is a Riemannian submersion.
    \item The Euler-Arnold equation of $(\cD^s_{\nu}(\Sph^2),\llangle\;,\;\rrangle)$ is given by
    \begin{eqnarray}\label{Geo-eq-S2}
     \partial_t(\rho^2-\Delta)\tilde{f} + \{\tilde{f} ,  (\rho^2-\Delta)\tilde{f}\}=0.
    \end{eqnarray}
\end{enumerate}
\end{The}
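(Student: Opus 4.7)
The plan is to verify (i) directly from the explicit formulas derived in the preceding propositions, and to deduce (ii) from (i) together with the Euler--Arnold formalism on the quantomorphism group. For part (i), I would first observe that $\Pi$ is a smooth surjective submersion whose fibers are cosets of the lifted $\Sph^1$-action; this follows from the principal bundle structure of $\pi:\Sph^3 \to \Sph^2$ as in Ebin--Preston. Combining the identity $\Ss_{\theta,\A}^*\Ss_\theta f = (\rho^2 - \Delta)f$ with the preceding integration lemma yields the key formula
\[
\langle \Ss_\theta f, \Ss_\theta g\rangle^\A = 2\pi \int_{\Sph^2} \tilde g (\rho^2 - \Delta)\tilde f\, d\nu
\]
for $E$-invariant $f, g$. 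The vertical subspace at the identity is $\ker(d\Pi_{\id}) = \R\cdot E$, since only constants on $\Sph^2$ have vanishing Hamiltonian vector field. Testing the formula against $g = 1$ shows that the horizontal subspace consists of those $\Ss_\theta f$ with $\int_{\Sph^2}\tilde f\rho^2\,d\nu = 0$. Thus the horizontal lift of $u = \nabla^\perp \tilde f \in T_{\id}\cD^s_\nu(\Sph^2)$ is $\Ss_\theta g$ with $\tilde g = \tilde f - \bar{\tilde f}_\rho$ and $\bar{\tilde f}_\rho = \big(\int \tilde f\rho^2\,d\nu\big)\big/\big(\int\rho^2\,d\nu\big)$. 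Expanding $\langle\Ss_\theta g,\Ss_\theta g\rangle^\A$ directly, the cross terms reproduce precisely the correction summand in \eqref{metric-Ham}, giving the desired isometry on horizontal vectors. Right-invariance of both metrics and right-equivariance of $\Pi$ then extend the statement from the identity to all of $\cD^s_q(\Sph^3)$.

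For part (ii), since $\Pi$ is a Riemannian submersion, horizontal geodesics on $\cD^s_q(\Sph^3)$ project to geodesics on $\cD^s_\nu(\Sph^2)$. The right-invariant Euler--Arnold equation on the quantomorphism group with inertia operator $\Delta_{\theta,\A}$ takes the form
\[
\partial_t \Delta_{\theta,\A} f + \{f, \Delta_{\theta,\A} f\}_\theta = 0,
\]
where $\{\cdot,\cdot\}_\theta$ denotes the contact Poisson bracket on functions on $\Sph^3$. Restricting to $E$-invariant $f$, the contact bracket descends to the Poisson bracket on $\Sph^2$ and $\Delta_{\theta,\A} f = (\rho^2 - \Delta)\tilde f$ by the preceding proposition, so the equation projects to \eqref{Geo-eq-S2}.

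The main obstacle is the gauge degeneracy: stream functions are defined only modulo constants, and the inertia operator on Hamiltonian vector fields is properly $A_0 \tilde f = (\rho^2 - \Delta)\tilde f - \bar{\tilde f}_\rho \rho^2$, with the second term precisely what the correction in \eqref{metric-Ham} encodes. This correction vanishes in the gauge $\int \tilde f\rho^2\,d\nu = 0$. Integrating \eqref{Geo-eq-S2} against the constant function $1$ (noting that Poisson brackets integrate to zero on a closed surface) shows that $\int \tilde f\rho^2\,d\nu$ is conserved along the dynamics, so this gauge is preserved by the flow and the projected equation takes the stated form.
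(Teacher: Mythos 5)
Your proposal is correct, and the two parts fare differently against the paper. Part (i) is essentially the paper's own argument: the paper likewise identifies the horizontal lift of $\nabla^\perp\tilde f$ as $\Ss_\theta(f+a)$ with $a=-\int_{\Sph^2}\tilde f\rho^2\,d\nu\big/\int_{\Sph^2}\rho^2\,d\nu$ (obtained from $\langle \Ss_\theta\bar f,E\rangle^{\A}=0$, i.e.\ your ``test against $g=1$'') and then expands the inner product so that the cross terms produce the correction summand in the definition of $\llangle\;,\;\rrangle$. Part (ii) is where you genuinely diverge: the paper stays downstairs and computes $\llangle \ad^*_u v,w\rrangle$ directly for the corrected metric on $\cD^s_\nu(\Sph^2)$, obtaining an equation with two explicit extra terms proportional to $\rho^2$ and to $\{\tilde f,\rho^2\}$, which are then killed by the gauge choice $\int\tilde f\rho^2\,d\nu=0$; you instead invoke the submersion from (i) to project the upstairs Euler--Arnold equation $\partial_t\Delta_{\theta,\A}f+\{f,\Delta_{\theta,\A}f\}=0$. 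Your route is more conceptual and reuses (i), but it silently relies on two facts: that horizontal geodesics of a \emph{weak} right-invariant metric on these infinite-dimensional groups project to geodesics (and that every base geodesic arises this way), and the form of the upstairs Euler--Arnold equation itself --- the latter is only derived in the paper in Section~3, by exactly the kind of $\ad^*$ computation you are avoiding downstairs, so the work is relocated rather than saved. On the other hand, your observation that integrating \eqref{Geo-eq-S2} against $1$ shows $\int\tilde f\rho^2\,d\nu$ is conserved, so the gauge is propagated by the flow, is a cleaner justification of the normalization than the paper's ``one can always choose $c(t)$''. No gaps beyond these standard facts; the argument is sound.
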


\begin{proof}
\phantom{hej}

\textit{i}.    For $\tilde{f}, \tilde{g} \in H^{s}(\Sph^2,\R)$, consider $f, g \in H^{s}(\Sph^3,\R)$ defined by $f(x) = \tilde{f}(\pi(x))$ and $g(x) = \tilde{g}(\pi(x))$. 
Set $\bar{f} = f + a$ and $\bar{g} = g + b$, where $a$ and $b$ are real constants chosen to ensure that $\Ss_{\theta}\bar{f}$ and $\Ss_{\theta}\bar{g}$ are horizontal. Specifically, we select $a$ and $b$ such that
\[
\langle \Ss_{\theta}\bar{f},E\rangle_{\mathcal{A}}  =  \langle \Ss_{\theta}\bar{g},E\rangle_{\mathcal{A}}  = 0.
\]
Note that, $\langle \Ss_{\theta}\bar{f} , E\rangle_{\mathcal{A}}=0$ if and only if 
\begin{eqnarray*}
\int_{\Sph^3}g^{\Sph^3} \big( \mathcal{A}\Ss_\theta(f+a),\Ss_\theta 1 \big) d\mu  &=&  \int_{\Sph^3}(f+a)\Delta_{\theta,\mathcal{A}}1 d\mu\\
&=& \int_{\Sph^3}f\rho^2 d\mu    + a \int_{\Sph^3}\rho^2 d\mu\\
&=&  2\pi\big( \int_{\Sph^2}\tilde{f}{\rho}^2 d\nu    +   a \int_{\Sph^2}{\rho}^2 d\nu \big) = 0.
\end{eqnarray*}
This implies that $a=-\frac{ \int_{\Sph^2}\tilde{f}\rho^2 d\nu     }{   \int_{\Sph^2}\rho^2 d\nu}$. Similarly, for $\bar{g}$, we have
$b=-\frac{ \int_{\Sph^2}\tilde{g}\rho^2 d\nu     }{   \int_{\Sph^2}\rho^2 d\nu}$. As a result we have
\begin{eqnarray*}
&& \langle \Ss_{\theta}\bar{f}  ,  \Ss_{\theta}\bar{g} \rangle_{\mathcal{A}}    \\
&=&\int_{\Sph^3} f\Delta_{\theta,\mathcal{A}} g d\mu + a\int_{\Sph^3} g\Delta_{\theta,\mathcal{A} }1 d\mu   + b\int_{\Sph^3}f\Delta_{\theta,\mathcal{A}}1 d\mu   + ab \int_{\Sph^3}\Delta_{\theta,\mathcal{A}}1 d\mu\\
&=&   2\pi\int_{\Sph^2} \tilde{f}(\rho^2-\Delta) \tilde{g}  d\nu  
-2\pi\frac{ \int_{\Sph^2}\tilde{f} \rho^2 d\nu     }{   \int_{\Sph^2}\rho^2 d\nu}   \int_{\Sph^2} \tilde{g} \rho^2 d\nu   
 -2\pi\frac{ \int_{\Sph^2}\tilde{g}\rho^2 d\nu     }{   \int_{\Sph^2}\rho^2 d\nu}   \int_{\Sph^2}\tilde{f} \rho^2 d\nu\\
&& + 2\pi\frac{     \int_{\Sph^2}\tilde{f} \rho^2 d\nu    \int_{\Sph^2}\tilde{g}\rho^2 d\nu     }{   \int_{\Sph^2}\rho^2 d\nu}   \\
&=&   2\pi\int_{\Sph^2} \tilde{f}(\rho^2-\Delta) \tilde{g}  d\nu  
-2\pi\frac{ \int_{\Sph^2}\tilde{f}\rho^2 d\nu     }{   \int_{\Sph^2}\rho^2 d\nu}   \int_{\Sph^2} \tilde{g} \rho^2 d\nu   =   \llangle u,v\rrangle
\end{eqnarray*}

\textit{ii.} To prove this part, we will write the Euler-Arnold equation  $\partial_t u+\ad^*_uu=0$ for  $\cD^s_{\textrm{Ham}}(\Sph^2)$ in the context of the metric given by  \eqref{metric-Ham}. Consider $u=\nabla^\perp \tilde{f}, v=\nabla^\perp\tilde{g}, w=\nabla^\perp\tilde{h}\in\tilde{\cg}$. Then we have
\begin{eqnarray*}
\llangle \ad^*_uv,w\rrangle   &=&   \llangle v,\ad_uw\rrangle = -\llangle v,\nabla^\perp\{\tilde{f},\tilde{h}\}\rrangle  \\
&=& - 2\pi \int_{\Sph^2}  \tilde{g} \big(  ( \rho^2-\Delta)\{\tilde{f},\tilde{h}\} \big)   d\nu  
+ 2\pi\frac{  \int_{\Sph^2}\tilde{g} \rho^2 d\nu   \int_{\Sph^2}\{\tilde{f},\tilde{h}\} \rho^2 d\nu }{ \int_{\Sph^2} \rho^2 d\nu}\\
&=&  2\pi \int_{\Sph^2}   \{\tilde{f} , (\rho^2-\Delta)\tilde{g}\}\tilde{h}   d\nu  
-  2\pi\frac{    \int_{\Sph^2}\tilde{g} \rho^2  d\nu   \int_{\Sph^2}\{\tilde{f}, \rho^2\} \tilde{h}   d\nu    }{    \int_{\Sph^2} \rho^2 d\nu    }\\
&=&  2\pi\int_{\Sph^2} \Big(  \{\tilde{f},(\rho^2-\Delta)\tilde{g}\}  -  \frac{    \int_{\Sph^2}\tilde{g} \rho^2 d\nu       }{    \int_{\Sph^2} \rho^2 d\nu    }       \{\tilde{f}, \rho^2\}   \Big)\tilde{h}   d\nu.
\end{eqnarray*}
As a result for the time dependent vector field $u$ and arbitrary $w\in\tilde{\cg}$ we get
\begin{eqnarray*}
\llangle \partial_tu+\ad^*_uu,w\rrangle   &=&  2\pi \int_{\Sph^2} \Big(   \partial_t (\rho^2-\Delta )\tilde{f}   
-  \frac{    \int_{\Sph^2}\partial_t\tilde{f} \rho^2 d\nu       }{    \int_{\Sph^2} \rho^2 d\nu    }   \rho^2         \\
&&  +   \{\tilde{f} , (\rho^2-\Delta)\tilde{f}\}    -  \frac{    \int_{\Sph^2}\tilde{f} \rho^2 d\nu       }{    \int_{\Sph^2} \rho^2 d\nu    }       \{\tilde{f}, \rho^2\}   \Big)    \tilde{h}   d\nu.
\end{eqnarray*}
which means that $\partial_tu+\ad^*_uu=0$ if and only if
\begin{eqnarray*}
\partial_t(\rho^2-\Delta)\tilde{f}     +  \{\tilde{f},(\rho^2-\Delta)\tilde{f}\}
-  \frac{   \partial_t \int_{\Sph^2}\tilde{f} \rho^2 d\nu       }{    \int_{\Sph^2} \rho^2 d\nu    } \rho^2          
-  \frac{    \int_{\Sph^2}\tilde{f} \rho^2 d\nu       }{    \int_{\Sph^2} \rho^2 d\nu    } 
\{\tilde{f}, \rho^2 \} = 0
\end{eqnarray*}
It is always possible to choose $c(t):\R\longrightarrow\R$ and replace the Hamiltonian function $ \tilde{f}=\tilde{f}(t)$ with $\tilde{f}(t)+c(t)$  such that $\int_{\Sph^2}(\tilde{f} +c )\rho^2 d\nu=0$. Notice that $u=\nabla^\perp\tilde{f}=\nabla^\perp(\tilde{f}+c)$
which means that we can always choose a  Hamiltonian function for $u$ with the property $\int_{\Sph^2}\tilde{f} \tilde{\phi} \bar{\rho}  d\nu=0 $. As a result, the Euler-Arnold equation becomes
\begin{equation*}
\partial_t (\rho^2 - \Delta) \tilde{f} +  \{\tilde{f},(\rho^2 -\Delta)\tilde{f}\}=0.
\end{equation*}
\end{proof}

\section{Local and global well-posedness}\label{sec:analysis}
%
%%%%%%%%%%%%%%%%%%%%%%%%%%%%%%%%%%%%%%%      Local existence     %%%%%%%%%%%%%%%%%%%%%%%%%%%%%%%%%%%
%

\begin{The}(\textbf{Local well-posedness})
Geodesics of $(\cD^s_q(\Sph^3),\langle\;,\;\rangle_{\mathcal{A}})$ exists locally in the sense of the Picard-Lindelöf theorem.
That is, for any choice of initial conditions, there exists a (non-empty) maximal time interval $(-T_a,T_b)$ for which a solution exists, is unique, and depends smoothly on the initial data.
% for non-trivial lifetime exist and the dependence on the initial condition is smooth.
\end{The}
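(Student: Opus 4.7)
The plan is to follow the Ebin-Marsden framework, adapted to quantomorphism groups by Ebin-Preston, and cast the geodesic equation as a smooth second-order ODE on the Hilbert manifold $\cD^s_q(\Sph^3)$, to which the standard Picard-Lindelöf theorem on Hilbert manifolds applies. Writing a curve $\varphi(t)$ with Eulerian velocity $u(t)=\dot\varphi(t)\circ\varphi(t)^{-1}=\Ss_\theta f(t)$, the Euler-Arnold equation $\partial_t u + \ad^*_u u = 0$ established in the preceding theorem becomes, in Lagrangian form,
\begin{equation*}
\ddot\varphi = \Gamma_\varphi(\dot\varphi,\dot\varphi),
\end{equation*}
where $\Gamma_\varphi$ is the Christoffel map of the right-invariant metric $\langle\,,\,\rangle^\A$ obtained by conjugating the Eulerian right-hand side with right-translation $R_\varphi$. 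Local existence, uniqueness, and smooth dependence on initial data then follow once $\Gamma$ is shown to be a smooth vector field on $T\cD^s_q(\Sph^3)$.

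The central task is therefore smoothness of the spray, for which I would invoke the strategy of \cite{Ebin-Preston-arXiv}. The guiding principle is the Ebin-Marsden cancellation: although the Eulerian expression for $\partial_t u$ involves the Poisson bracket $\{f,Af\}$ with $A=\rho^2-\Delta$, which naively loses one derivative when transferred to the Lagrangian side, this loss is exactly compensated by the contribution from differentiating $R_\varphi$ in time. Concretely, one decomposes $\Gamma_\varphi$ into: (i) right-composition with $\varphi$, smooth on $H^s$ because $s>\dim\Sph^3/2$; (ii) pointwise operations involving $\rho^2$, smooth because $\rho$ is fixed on $\Sph^2$ and $\rho\circ\varphi$ depends smoothly on $\varphi\in H^s$; (iii) the Poisson bracket combined with the geometric term, which is bounded of the appropriate order after the Ebin-Marsden cancellation (using the Sobolev multiplication $H^s\cdot H^{s-2}\hookrightarrow H^{s-2}$, valid for $s>5/2$); and (iv) the inverse of the conjugated inertia operator $A_\varphi := R_\varphi\circ A\circ R_{\varphi^{-1}}$.

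The main obstacle is the smoothness of $\varphi\mapsto A_\varphi^{-1}$ as a map into bounded operators $H^{s-2}(\Sph^2,\R)\to H^s(\Sph^2,\R)$, which also provides the two-derivative gain needed to balance the derivative counts above. The presence of the non-isotropic term $\rho^2$ is a genuine modification compared to \cite{Ebin-Preston-arXiv}, but only at zeroth order: $A_\varphi$ is a smoothly varying family of second-order elliptic operators with coefficients polynomial in $d\varphi$ and in $\rho\circ\varphi$, and the coercivity estimate established in the preceding proposition guarantees uniform invertibility. Smoothness of inversion for such families is then standard, and the proof of Ebin-Preston adapts to this setting essentially verbatim. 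Once $\Gamma$ has been shown to be smooth, Picard-Lindelöf on Hilbert manifolds concludes.
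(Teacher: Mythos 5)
Your strategy is sound and would work, but it is genuinely different from the paper's. You set up the second-order geodesic equation $\ddot\varphi=\Gamma_\varphi(\dot\varphi,\dot\varphi)$ and argue smoothness of the spray via the Ebin--Marsden cancellation in the quadratic term. The paper instead exploits the conserved potential vorticity: along a geodesic, $m(t)\circ\eta(t)=m_0$ with $m=\Delta_{\theta,\mathcal{A}}f$, so geodesics with initial momentum $m_0$ are exactly the integral curves of the \emph{first-order} vector field $Z(\eta)=dR_\eta\,\Ss_\theta\Delta_{\theta,\mathcal{A}}^{-1}\,R_{\eta^{-1}}m_0$ on the group itself. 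Smoothness of $Z$ then reduces to the single fact that conjugated differential operators $X_\eta(h)=X(h\circ\eta^{-1})\circ\eta$ and the conjugated elliptic inverse $(\Delta_{\theta,\mathcal{A}})_\eta^{-1}$ depend smoothly on $\eta$; no cancellation in a quadratic term needs to be verified, and the conservation law is obtained as a byproduct that is reused in the global existence proof. Your route buys a statement about the full exponential map and does not presuppose a conserved quantity, but it costs you two things the paper avoids: (i) you must actually carry out the cancellation for the non-isotropic inertia operator $\rho^2-\Delta$ rather than assert it transfers verbatim, and (ii) since $\cD^s_q(\Sph^3)$ is a submanifold, the spray involves the $\langle\;,\;\rangle^{\mathcal{A}}$-orthogonal projection onto $T_\eta\cD^s_q(\Sph^3)$, whose smoothness is an additional elliptic step (essentially the same conjugated-inverse lemma again). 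Also note a small slip: the Euler--Arnold equation you invoke ``from the preceding theorem'' is the reduced equation on $\cD^s_\nu(\Sph^2)$, whereas the theorem concerns the metric upstairs on $\cD^s_q(\Sph^3)$; and smoothness of $\varphi\mapsto\rho\circ\varphi$ requires $\rho$ smooth, not merely $H^s$ (harmless here since $\rho^2=\gamma z^2$).
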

\begin{proof}
Following \cite{Ebin-Marsden} and \cite{Ebin-Preston} we show that the geodesics correspond to the integral curves of a smooth vector field on $\cD^s_q(\Sph^3)$. For $f\in H^{s+1}_E(\Sph^3,\R) $ define $m(t):=\Delta_{\theta,\mathcal{A}}\Ss_\theta f(t)$. Consider the vector field
\begin{eqnarray*}
Z:\cD^s_q(\Sph^3)\longrightarrow T\cD^s_q(\Sph^3)\quad;\quad \eta\longmapsto (\Ss_\theta(\Delta_{\theta,\mathcal{A}})^{-1})_\eta m_0
\end{eqnarray*}
where 
\[
(\Ss_\theta(\Delta_{\theta,\mathcal{A}})^{-1})_\eta=dR_\eta (\Ss_\theta(\Delta_{\theta,\mathcal{A}})^{-1})\circ (R_\eta)^{-1}m(0)
\]
and $u(t)=\Ss_\theta f(t)=\dot{\eta}(t)\circ\eta^{-1}(t)$. Before demonstrating that this vector field is smooth, we first show that the integral curves of $Z$ correspond to the solutions of 
\begin{eqnarray*}
\partial_t (\Delta_{\theta,\mathcal{A}} f(t) \circ\eta(t) ) &=& 
\Big( \partial_t \Delta_{\theta,\mathcal{A}} f(t) + \Ss_\theta f(t) \big(\Delta_{\theta,\mathcal{A}} f(t)\big)       \Big)\circ\eta(t) \\
&=&  \Big( \partial_t \Delta_{\theta,\mathcal{A}} f(t) + \{ f(t) ,\Delta_{\theta,\mathcal{A}} f(t)\}       \Big)\circ\eta(t) = 0
\end{eqnarray*}
% {\color{red}KM: Equation (12)?} {\color{blue}Ali: It is not exactly equation 12, so I stated it directly.}
Let $\eps>0$ and $\eta:(-\eps,\eps)\longrightarrow \cD^s_q(\Sph^3) $ be an integral curve of $Z$. The $\dot{\eta}(t)=\Ss_\theta f(t)\circ\eta(t)=Z(\eta(t))$. 
Then
\begin{eqnarray*}
&&                        \Ss_\theta f(t)\circ\eta(t) =  dR_\eta (\Ss^*_{\theta,\mathcal{A}})^{-1} (R_\eta)^{-1}m(0)\\
&\Longleftrightarrow&     dR_\eta\Ss_\theta f(t)      =  dR_\eta (\Ss^*_{\theta,\mathcal{A}})^{-1} (R_\eta)^{-1}m(0)\\
&\Longleftrightarrow&     \Ss_\theta f(t)    =  (\Ss^*_{\theta,\mathcal{A}})^{-1} (R_\eta)^{-1}m(0)\\
&\Longleftrightarrow&     \Ss^*_{\theta,\mathcal{A}} \Ss_\theta f(t)    =   m(0)\circ\eta(t)^{-1}\\
&\Longleftrightarrow&     \Ss^*_{\theta,\mathcal{A}} \Ss_\theta f(t)    =   m(0)\circ\eta(t)^{-1}\\
&\Longleftrightarrow&     \Delta_{\theta,\mathcal{A}} f(t)              =   m(0)\circ\eta(t)^{-1}\\
&\Longleftrightarrow&     \Delta_{\theta,\mathcal{A}} f(t) \circ\eta(t) =   m(0)\\
&\Longleftrightarrow&     \partial_t (\Delta_{\theta,\mathcal{A}} f(t) \circ\eta(t) )=  0\\
\end{eqnarray*}
which is true for any geodesic of the metric $(\cD^s_q(\Sph^3),\langle\;,\;\rangle_{\mathcal{A}})$.

On the other hand, for a first-order differential operator $X$, the operator
\begin{equation}\label{smooth operator}
X_\eta(h)(p):=X(h\circ\eta^{-1})\circ\eta|_p
\end{equation}
depends smoothly on $\eta\in\cD^s(\Sph^3)$ and $h\in H^s(\Sph^3)$ (see, e.g., \cite{Ebin-Marsden} or \cite[Thm.~3.1]{Ebin-Preston}).

In the next step, we show that  
\[
(\Ss^*_{\theta,\mathcal{A}})_\eta : T_\eta \cD^s(\Sph^3) \longrightarrow H^{s-1}(\Sph^3,\R) \quad ; \quad  
v_\eta \longmapsto R_\eta (\Ss^*_{\theta,\mathcal{A}}) \circ (dR_\eta)^{-1}v_\eta  
\]  
is smooth. Note that generally for $w=aE+bE_2+cE_3\in T_e\cD^s(\Sph^3)$ we have
\[
(\Ss^*_{\theta,\mathcal{A}})(w)= \rho^2 a +\frac{1}{2}  E_3 b - \frac{1}{2}E_2c 
\]  
More precisely, for  any differentiable function $f:\Sph^3\longrightarrow \R$ we have
\begin{eqnarray*}
&& \int_{\Sph^3}f(\Ss^*_{\theta,\mathcal{A}})(w)  d\mu  =   \int_{\Sph^3}g^{\Sph^3}\big(  \mathcal{A}\Ss_{\theta}f , w  \big)  d\mu\\
&=& \int_{\Sph^3}g^{\Sph^3}\big(  fh\rho^2 E - \frac{1}{2}  (E_3f)E_2  + \frac{1}{2}  (E_2f)E_3 , aE+bE_2+cE_3  \big)  d\mu\\
&=&   \int_{\Sph^3} \Big( fa\rho^2 g^{\Sph^3}\big( E ,E \big) - \frac{1}{2}  (E_3f)b  g^{\Sph^3}\big(   E_2 ,E_2  \big) + \frac{1}{2}(E_2f) cg^{\Sph^3}\big( E_3 ,E_3 \big)  \Big)  d\mu  \\
&=&   \int_{\Sph^3}  fa\rho^2d\mu  - \frac{1}{2} \int_{\Sph^3}  (E_3f)bd\mu  + \frac{1}{2}\int_{\Sph^3} (E_2f) c   d\mu  \\
&=&   \int_{\Sph^3}  fa\rho^2d\mu  + \frac{1}{2} \int_{\Sph^3} \Big( f(E_3b)+fb\mathrm{div}(E_3)  \Big)d\mu   - \frac{1}{2}\int_{\Sph^3} \Big( f(E_2c)+fc\mathrm{div}(E_2)  \Big)   d\mu  \\
&=&   \int_{\Sph^3}  \Big(  fa\rho^2  + \frac{1}{2}f(E_3b) - \frac{1}{2}f(E_2c)  \Big) d\mu  \\
&=&   \int_{\Sph^3}  f\Big(  a\rho^2  + \frac{1}{2}(E_3b) - \frac{1}{2}(E_2c)  \Big) d\mu.
\end{eqnarray*}
As a result for the operator $(\Ss^*_{\theta,\mathcal{A}})_\eta$ we have
\begin{eqnarray*}
&&   R_\eta (\Ss^*_{\theta,\mathcal{A}}) \circ (dR_\eta)^{-1}\Big( aE\circ\eta + bE_2\circ\eta + cE_3\circ\eta \Big)\\
&=&  R_\eta (\Ss^*_{\theta,\mathcal{A}}) \Big( a\circ\eta^{-1}E + b\circ\eta^{-1}E_2 + c\circ\eta^{-1}E_3 \Big)\\
&=&  R_\eta  \Big( \rho^2a\circ\eta^{-1} + \frac{1}{2}E_3(b\circ\eta^{-1})  -  \frac{1}{2} E_2(c\circ\eta^{-1}) \Big)\\
&=&   (\rho^2\circ\eta) a + \frac{1}{2}E_3(b\circ\eta^{-1})\circ\eta  -  \frac{1}{2} E_2(c\circ\eta^{-1})\circ\eta 
\end{eqnarray*}
which is smooth by \eqref{smooth operator}. Since $(\Ss^*_{\theta,\mathcal{A}})_\eta$ is smooth on $T_\eta \cD^s(\Sph^3)$, it remains smooth when restricted to \(T_\eta \cD^s_q(\Sph^3)\), as \(\cD^s_q(\Sph^3) \subseteq \cD^s(\Sph^3)\) is a smooth submanifold. On this submanifold, the operator \((\Ss^*_{\theta,\mathcal{A}})_\eta\) becomes an isomorphism from \(T_\eta \cD^s_q(\Sph^3)\) to \(H^{s-1}_E(\Sph^3,\R)\).  Furthermore, the operator  
\[
\Delta_{\theta,\mathcal{A}} : H^{s+1}_E(\Sph^3,\R) \longrightarrow H^{s-1}_E(\Sph^3,\R)  
\]  
is linear and smooth. The same holds true for the inverse of \(\Delta_{\theta,\mathcal{A}}\).  As a result,  
\[
(\Ss^*_{\theta,\mathcal{A}})^{-1}_\eta(h) = (\Ss_\theta (\Delta_{\theta,\mathcal{A}})^{-1}_\eta)(h)  
\]  
is also smooth. This implies that the vector field \(Z\) is smooth.  

Since \(Z\) is a smooth vector field on a Banach manifold, its integral curves exist by the standard theory of ordinary differential equations on Banach manifolds. Moreover, the solutions depend smoothly on the initial values.  
\end{proof}

%
%%%%%%%%%%%%%%%%%%%%%%%%%%%%%%%%%%%%%%%      Global existence    %%%%%%%%%%%%%%%%%%%%%%%%%%%%%%%%%%%
%
\begin{The}\label{Global existence}(\textbf{Global well-posedness})
For initial data $f_0\in H^{s+1}(\Sph^2,\R)\simeq H^{s+1}_{E}(\Sph^3,\R)$ with $s>2$,  the solution of equation \eqref{Geo-eq-S2} exists for all time.
\end{The}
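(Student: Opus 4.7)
The strategy is classical for two-dimensional ideal fluids: exploit the transport structure of the vorticity-type quantity $q := (\rho^2-\Delta)\tilde f$, combined with a Beale-Kato-Majda (or Wolibner / Yudovich)-type continuation argument. Local well-posedness has just been established, so it suffices to rule out blow-up in finite time of the $H^{s-1}$-norm of $q$ (equivalently, of the $H^{s+1}$-norm of $\tilde f$), since the elliptic invertibility of $\rho^2-\Delta$ proved earlier gives the two-derivative gain
\[
\|\tilde f\|_{H^{s+1}(\Sph^2)} \le C\,\|q\|_{H^{s-1}(\Sph^2)} .
\]

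The first step is to rewrite \eqref{Geo-eq-S2} as the pure transport equation $\partial_t q + \{\tilde f, q\} = 0$. Since $\nabla^\perp \tilde f$ is divergence-free with respect to the area form $d\nu$, the Lagrangian flow $\eta(t)\in\cD^s_\nu(\Sph^2)$ (which exists on the maximal interval by the local theorem) satisfies $q(t)\circ\eta(t) = q_0$. In particular every $L^p$-norm of $q$ is conserved, and in our case the crucial bound is
\[
\|q(t)\|_{L^\infty(\Sph^2)} = \|q_0\|_{L^\infty(\Sph^2)}\qquad\text{for all }t\in(-T_a,T_b).
\]
Note that $s>2$ implies $H^{s+1}(\Sph^2)\hookrightarrow C^2$, so $q_0\in L^\infty$ and this bound is meaningful.

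The second step is a standard energy estimate on the transport equation. Writing $\Lambda^{s-1}$ for $(1-\Delta)^{(s-1)/2}$, one applies $\Lambda^{s-1}$ to $\partial_t q + \{\tilde f, q\} = 0$, pairs with $\Lambda^{s-1}q$ in $L^2$, and invokes the Kato-Ponce commutator estimate to get
\[
\frac{d}{dt}\|q\|_{H^{s-1}}^2 \le C\,\|\nabla \tilde f\|_{L^\infty}\, \|q\|_{H^{s-1}}^2 .
\]
To close this differential inequality one bounds $\|\nabla\tilde f\|_{L^\infty}$ in terms of the conserved $\|q\|_{L^\infty}$ and of $\|q\|_{H^{s-1}}$. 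Since $\tilde f = (\rho^2-\Delta)^{-1}q$ and $\rho^2-\Delta$ has the same principal symbol as $-\Delta$, the sharp Brezis-Gallouet / Beale-Kato-Majda logarithmic estimate on $\Sph^2$ yields
\[
\|\nabla\tilde f\|_{L^\infty} \le C\Bigl(1 + \|q\|_{L^\infty}\bigl(1 + \log^+\|q\|_{H^{s-1}}\bigr)\Bigr) .
\]
Inserting this into the above inequality and applying Gronwall gives a double-exponential bound on $\|q(t)\|_{H^{s-1}}$ on any finite time interval.

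The only subtlety beyond the classical 2D Euler case is the presence of the $\rho^2$ term in the relation $q=(\rho^2-\Delta)\tilde f$, and this is the step one should carry out carefully: one must verify that elliptic regularity, the logarithmic Sobolev inequality, and the commutator estimate all tolerate the smooth multiplication operator $\rho^2$. Since $\rho\in C^\infty(\Sph^2)$ and $\rho^2-\Delta$ is a self-adjoint, positive, second-order elliptic operator with the same principal part as $-\Delta$, each of these ingredients goes through with constants depending only on $\rho$ and $s$. With the a priori bound on $\|q\|_{H^{s-1}}$ in hand, and hence on $\|\tilde f\|_{H^{s+1}}$, the maximal interval $(-T_a,T_b)$ from the local theorem cannot be finite, proving global existence.
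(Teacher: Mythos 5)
Your overall strategy is sound and it reaches the same destination as the paper, but by a genuinely different route in the key step. Both arguments rest on the same two pillars: the transported quantity $q=(\rho^2-\Delta)\tilde f$ with $q(t)\circ\eta(t)=q_0$, and a Gronwall estimate for $\|q\|_{H^{s-1}}$ driven by a commutator bound whose coefficient is the $C^1$-size of the velocity. Where you differ is in how that coefficient is controlled. The paper follows the Wolibner--Kato--Ebin--Preston chain: the conserved $L^\infty$ bound on $q$ makes $u=\nabla^\perp\tilde f$ quasi-Lipschitz (estimate \eqref{quasi-Lipschitz bound}), hence the flow $\eta$ and its inverse are H\"older continuous, hence $q(t)=q_0\circ\eta(t)^{-1}$ stays in $C^\alpha$ uniformly in time, and Schauder elliptic regularity for $\rho^2-\Delta$ then gives a genuinely \emph{uniform} bound on $u$ in $C^{1+\alpha}$. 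You instead invoke the Brezis--Gallou\"et / Beale--Kato--Majda logarithmic inequality to bound the gradient of the velocity by $\|q\|_{L^\infty}\log(e+\|q\|_{H^{s-1}})$, accepting a double-exponential (rather than uniform) bound on $\|q(t)\|_{H^{s-1}}$. Your route is shorter and avoids the H\"older-flow machinery entirely; the paper's route yields the stronger conclusion that the velocity stays bounded in $C^{1+\alpha}$ for all time. Your remark that the only new feature relative to flat 2D Euler is the zeroth-order perturbation $\rho^2$, and that elliptic regularity, Kato--Ponce, and the log-Sobolev inequality all tolerate it, is exactly the right point to isolate, and it is true.

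One correction is needed before the argument closes. As written, your differential inequality and your logarithmic estimate both feature $\|\nabla\tilde f\|_{L^\infty}$, which is the sup-norm of the \emph{velocity} itself. That quantity is controlled directly by $\|q\|_{L^\infty}$ via elliptic regularity and needs no logarithm; moreover it is not what the Kato--Ponce commutator estimate produces. The commutator $[\Lambda^{s-1},\,u\cdot\nabla]q$ is bounded by
\[
C\bigl(\|\nabla u\|_{L^\infty}\|q\|_{H^{s-1}}+\|u\|_{H^{s-1}}\|q\|_{L^\infty}\bigr),
\]
so the coefficient in the Gronwall inequality must be $\|\nabla u\|_{L^\infty}=\|\nabla\nabla^\perp\tilde f\|_{L^\infty}$, i.e.\ \emph{two} derivatives of the stream function. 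This is precisely the quantity that fails to be bounded by $\|q\|_{L^\infty}$ alone (it is a Calder\'on--Zygmund operator applied to $q$) and for which the logarithmic inequality
\[
\|\nabla u\|_{L^\infty}\le C\Bigl(1+\|q\|_{L^\infty}\bigl(1+\log^{+}\|q\|_{H^{s-1}}\bigr)\Bigr)
\]
is actually required. With $\nabla\tilde f$ replaced by $\nabla u$ throughout, your argument is the standard BKM continuation criterion adapted to $\rho^2-\Delta$ and it goes through.
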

\begin{proof}
The idea for the proof originates from \cite[Thm.~4.6]{Ebin-Preston-arXiv} and \cite[Thm.~3.3]{Ebin-Preston}, with minor modifications. 
In particular, we shall refer to the computations in \cite[Thm.~4.6]{Ebin-Preston-arXiv} at various steps. 

Let $(-T_a, T_b)$ denote the maximum time interval for the $H^{s+1}$ time-dependent stream function
\[
f(t):\Sph^2 \longrightarrow \R
\]
which solves equation~\eqref{Geo-eq-S2}.
If $T_a$ is finite, then $\lim_{t\rightarrow T_a} \|f(t)\|_{H^s+1}=\infty$. 
The same holds for $-T_b$. 
To show the global existence, we prove that the norm $\|f(t)\|_{H^s+1}$, or equivalently $\|u(t)\|_{s}$ with $u(t)=\nabla^\perp f(t)$, remains bounded. 
To this end, we consider three objects: the $H^{s+1}$ stream function $f$, the $H^s$ velocity field $u(t)=\nabla^\perp f(t)$, and the corresponding Hamiltonian  diffeomorphisms $\eta_t$. 
Define the quantity
\[
m(t)=(\rho^2-\Delta )f(t). 
\]
Suppose $f(t)$ is a solution of \eqref{Geo-eq-S2}. 
Then 
\begin{eqnarray*}
\partial_t \big( m(t)\circ \eta(t)\big)  &=&   (\rho^2-\Delta )\partial_tf(t)\circ\eta(t) + dm(t)\partial_t\eta(t)\\
&=& \Big(  (\rho^2-\Delta )\partial_tf(t) + u(t)m(t) \Big) \eta(t)\\
&=& \Big(  (\rho^2-\Delta )\partial_tf(t) + \{f,(\rho^2 -\Delta) f  \}\Big) \eta(t) = 0, 
\end{eqnarray*}
which implies that the quantity $ m(t)\circ \eta(t)=m(0)$ is conserved. This conserved quantity will be used to show that the corresponding velocity field $u(t)$  is quasi-Lipschitz. More precisely for the inverse of the elliptic operator 
\[
\Delta_{\theta,\mathcal{A}}:H^{s+1}(\Sph^2,\R) \longrightarrow H^{s-1}(\Sph^2,\R)\quad ;\quad f\mapsto (\rho^2 -\Delta) f  
\]
consider the Schwartz kernel\footnote{We recall that, the Schwartz kernel of an elliptic invertible operator \( P \) on a compact manifold \( N \)  is a distributional kernel $k_P:N\times N \longrightarrow \R$, which acts by integrating against functions \( f \) on \( M \) via:
\[
P f(x) = \int_N k_P(x, y) f(y) \, d\text{vol}_N(y),
\]
where $d\text{vol}_N $ is the Riemannian volume form on $ N $. $ k_P(x, y) $ is smooth  on $N\times N\setminus \textrm{Diag}$, where $\textrm{Diag} = \{(x, x) \mid x \in N \}$ is the diagonal.  
% {\color{red}KM: Something missing?} {\color{blue}Ali: I suggest to remove "Moreover".} 
}        
$k:\Sph^2\times\Sph^2 \longrightarrow \R$ defined by
\[
f(x)=\int_{\Sph^2}k(x,y) \Delta_{\theta,\mathcal{A}} f(y)dy.
\]
For more details see \cite[Ch.~7]{taylor} and \cite[p.~28]{Ebin-Preston-arXiv}.

In this step, we show that $f$ is bounded uniformly in time. 
Since $f$ is a solution of \eqref{Geo-eq-S2}, then $\int_{\Sph^2} \Delta_{\theta,\mathcal{A}} f$ is constant in time. Furthermore,
\[
\int_{\Sph^2} \Delta_{\theta,\mathcal{A}} f d\nu = \int_{\Sph^2} f\Delta_{\theta,\mathcal{A}} 1 d\nu =\int_{\Sph^2} f\rho^2 d\nu\geq \min f\int_{\Sph^2} \rho^2 d\nu .
\]  
Suppose that $|df|<K$. Then the bound
\begin{equation}
\max{f}-\min{f}<\pi K
\end{equation}
implies that $\max f$ is also bounded and consequently  $f$ is bounded uniformly in time.

Consider, around a point of $\Sph^2$, a normal neighborhood with radius $\pi$ (the injectivity radius for the sphere with radius $1$). 
There is a method to reconstruct the differential $df$ from $\Delta_{\theta,\mathcal{A}} f$, as utilized in various sources, including Theorem 2.5.1 of \cite{Morrey}, Lemma 1.4 of \cite{Kato}, Lemma 6.3 of \cite{Ebin1}, Theorem 1.1 of \cite{Benn-Suri}, \cite{Ebin-Preston-arXiv} and Theorem 3.3 of \cite{Ebin-Preston}. 
In fact, replacing $\Delta_\theta$ in \cite{Ebin-Preston-arXiv} with the operator $\Delta_{\theta,\mathcal{A}}$ we observe that 
\begin{equation}\label{quasi-Lipschitz bound}
|df^\prime(y)-df(y)|\leq K |x-y|\log \big(\frac{\pi}{|x-y|}\big)
\end{equation}
where $|x-y|$ denotes the distance on sphere and  $\pi$ represents  the diameter of the manifold $\Sph^2$. We recall that for $y\in\Sph^2$ with $|x-y|< \pi$,  $df^\prime(y)\in T_y\Sph^2$ is the parallel transport along a parametrized geodesic $\chi$ with the property $\chi(0)=x$ and $\chi(1)=y$, see \cite[pp.29-30]{Ebin-Preston-arXiv}. 
If we divide both sides of \eqref{quasi-Lipschitz bound} by $|x-y|^\gamma$ for  $0<\gamma<1$ then
\begin{equation*}
\frac{|df^\prime(y)-df(y)|}{|x-y|^\gamma}   \leq K |x-y|^{\alpha}\log \big(\frac{\pi}{|x-y|}\big)
\end{equation*}
where $\alpha=1-\gamma$. But $\log \big(\frac{\pi}{|x-y|}\big)$ grows slower than $|x-y|^{\alpha}$. As a result $df$ (or equivalently $u = \nabla^\perp f$) is $C^\alpha$ \footnote{     Locally, for $k\in\mathbb{N}\cup\{0\}$, the   $C^{k+\alpha}$ norm of a diffeomorphism $\eta$ on a manifold $M$  is defined as follows
\[
|\eta|_{C^{k+\alpha}} = \max_{j \leq k} |D^j\eta|_\infty + 
 \sup_{x \neq y} \frac{|D^k\eta(x) - D^k\eta(y)|}{|x- y|^\alpha},
\]
where $\|D^j\eta\|_\infty$ is the sup-norm of the  $D^j\eta$.}. The parameter $K$ in the previous estimate is independent of time and by increasing $K$ and compactness of $\Sph^2$, we can also drop the condition $|x-y|< \pi$. 
According to \cite[Sec.~2.5]{Kato}, there then exist positive constants $C$ and $\beta$ such that the flow satisfies
\[
|\eta(t)(x)-\eta(t)(y)|\leq C|x-y|^\beta
\]
indicating that the flow $\eta$ is $C^\beta$.  
In the next step, we demonstrate that $\eta^{-1}$ is also H\"{o}lder continuous. 
For $t_0 \in [0, T_e)$, define $v(t) := -u(t_0 - t)$, with $\sigma$ as the flow associated with $v$. It follows that $\sigma(t_0) = \eta^{-1}(t_0)$. 
As shown in \cite[pp.~30-31]{Ebin-Preston-arXiv}, the flow $\sigma$ is $C^\alpha$, confirming the H\"{o}lder continuity of $\eta^{-1}$.  
From the equation
\[
\Delta_\rho f(t)=\Delta_\rho f(0)\circ \sigma(t)
\]
and the fact that $f(0)\in H^{s+1}(\Sph^2,\R)$, we conclude that $$\Delta_\rho f(0)\in H^{s-1}(\Sph^2,\R)\subseteq C^{\alpha}(\Sph^2,\R).$$ 
Moreover, we proved that  $\sigma$ is $C^\alpha$ as well. As a result, $\Delta_\rho f(t)$ is also $C^\alpha(\Sph^2,\R)$ independent of $t$. Since $\Delta_\rho$ is elliptic, the stream function $f(t)$ is bounded in $C^{2+\alpha}(\Sph^2,\R) $ and $u=\nabla^\perp f$ is bounded in $C^{1+\alpha}(T\Sph^2)$.

Finally, we show $f(t)$ to be bounded in the $H^{s+1}$ topology, or equivalently that $\Delta_{\theta,\mathcal{A}} f(t)$ is bounded in $H^{s-1}.$ We note that
$$ \partial_t \Delta_{\theta,\mathcal{A}} f = -\nabla^\perp f (\Delta_{\theta,\mathcal{A}} f)$$
so
\begin{eqnarray*}
 \partial_t \int_{\Sph^2}  (\Delta_{\theta,\mathcal{A}} f)^2 d\nu  &=&  \int_{\Sph^2} \partial_t  (\Delta_{\theta,\mathcal{A}} f)^2 d\nu\\
&=&  2 \int_{\Sph^2}  (\partial_t  \Delta_{\theta,\mathcal{A}} f)(\Delta_{\theta,\mathcal{A}} f) d\nu\\
&=&  - 2 \int_{\Sph^2}  (\nabla^\perp f  \Delta_{\theta,\mathcal{A}} f)(\Delta_{\theta,\mathcal{A}} f) d\nu\\
&=&  -  \int_{\Sph^2}  \nabla^\perp f  (\Delta_{\theta,\mathcal{A}} f)^2 d\nu\\
&=&  -  \int_{\Sph^2} \textrm{div} \big( (\Delta_{\theta,\mathcal{A}} f)^2 \nabla^\perp f\big)   d\nu   +  \\ && \int_{\Sph^2} (\Delta_{\theta,\mathcal{A}} f)^2 \textrm{div}( \nabla^\perp f)   d\nu  = 0.
\end{eqnarray*}
Moreover, taking $s-1$ spatial derivatives
\begin{eqnarray*}
\partial_t \int_{\Sph^2} |\nabla^{s-1} \Delta_{\theta,\mathcal{A}} f |^2 d\nu & = &  2\int_{\Sph^2} \langle \nabla^{s-1} \partial_t \Delta_{\theta,\mathcal{A}} f , \Delta_{\theta,\mathcal{A}} f  \rangle   d\nu\\
&=& -2 \int_{\Sph^2} \langle  \nabla^{s-1}  \nabla_{\nabla^\perp f} \Delta_{\theta,\mathcal{A}} f, \nabla^{s-1} \Delta_{\theta,\mathcal{A}} f\rangle d\nu\\
&=&-2 \int_{\Sph^2} \langle \nabla_{\nabla^\perp f} \nabla^{s-1} \Delta_{\theta,\mathcal{A}} f, \nabla^{s-1} \Delta_{\theta,\mathcal{A}} f\rangle d\nu\\
&& -2 \int_{\Sph^2} \langle [\nabla^{s-1}  ,  \nabla_{\nabla^\perp f} ] \Delta_{\theta,\mathcal{A}} f, \nabla^{s-1} \Delta_{\theta,\mathcal{A}} f\rangle d\nu\\
&=& - \int_{\Sph^2} \nabla_{\nabla^\perp f} \langle \nabla^{s-1} \Delta_{\theta,\mathcal{A}} f, \nabla^{s-1} \Delta_{\theta,\mathcal{A}} f\rangle d\nu\\
&& +2  \int_{\Sph^2} \langle [\nabla_{\nabla^\perp f}, \nabla^{s-1}] \Delta_{\theta,\mathcal{A}} f, \nabla^{s-1} \Delta_{\theta,\mathcal{A}} f\rangle d\nu\\
&=& 2  \int_{\Sph^2} \langle [\nabla_{\nabla^\perp f}, \nabla^{s-1}] \Delta_{\theta,\mathcal{A}} f, \nabla^{s-1} \Delta_{\theta,\mathcal{A}} f\rangle d\nu.
\end{eqnarray*}
where $[ \;,\;]$ denotes the commutator. For the last term of the above equation we use the estimate
$$\| [ \nabla^l ,\nabla_u] g \| = \nabla^l (ug) - u \nabla^l g\|_{H^0} \leq K \big(\| u\|_{H^l} \|g\|_{C^0} + \|\nabla u \|_{C^0} \|g\|_{H^{l-1}}\big),
$$ 
which can be found in \cite[Ch.~13, Prop.~3.7]{taylor}. 
We let $l=s-1,$ $u = \nabla^\perp f$, and $ g = \Delta_{\theta,\mathcal{A}} f$ from which we obtain
$$
\int_{\Sph^2} \big\langle[\nabla_{S_{\theta} f}, \nabla^{s-1}]\Delta_{\theta,\mathcal{A}} f,
\nabla^{s-1} \Delta_{\theta,\mathcal{A}} f\big\rangle \mu \leq K \|\nabla^\perp f\|_{C^1} \| \Delta_{\theta,\mathcal{A}} f \|^2_{H^{s-1}}.
$$
Thus,
$$
\frac{d}{dt} \|\Delta_{\theta,\mathcal{A}} f \|^2_{H^{s-1}} \leq K \|\nabla^\perp f\|_{C^1} \| \Delta_{\theta,\mathcal{A}} f \|^2_{H^{s-1}}.
$$
If $\|\nabla^\perp \|_{C^1} \|$ is finite then, using Gronwall's inequality, we get
\[
\|\Delta_{\theta,\mathcal{A}} f \|^2_{H^{s-1}} \leq \exp(K \int_0^t\|\nabla^\perp f(s)\|_{C^1}ds ).
\]
However, we already proved that $f$ belongs to $C^{2+\alpha}(\Sph^2,\R)$. 
The same reasoning combined with a time reversal gives boundedness on $(-T_b,0].$ This completes the proof.
\end{proof}

%
%%%%%%%%%%%%%%%%%%%%%%%%%%%%%%%%%%%%%%%%%%     Central extension    %%%%%%%%%%%%%%%%%%%%%%%%%%%%%%%%%%%%%%%%%
%
\subsection{Central extension and  the second advected quantity}\label{sec:central extension}
To obtain the full equations \eqref{eq-SWQG} as geodesics, we impose the term generated by the factor $(\frac{2}{{\rm Ro}} + 2 h)z$ into the geometric formulation \eqref{Geo-eq-S2} by considering the $1$-dimensional central extension of $\cD_q(\Sph^3)$.

Following \cite{Ebin-Preston-arXiv, Lee-Pre, Suri-2023, Tauchi-Yoneda-2} we consider the central extension of the Lie algebra $\cg=T_e\cD_{q}(\Sph^3)$ with $\mathbb{R}$, which we denoted by $\hat\cg=\cg\rtimes_\Omega\mathbb{R}$. 
For $u=\Ss_\theta f$ and $v=\Ss_\theta  g$ in $\cg$, consider the trivial cocycle  
\begin{equation}\label{Cocycle Omega on S3}
\Omega(u,v)=\int_{\Sph^3}\varphi\{f,g\}d\mu
\end{equation}
where $\varphi:\Sph^3\longrightarrow\mathbb{R}$ is a fixed function. The contribution induced by $\Omega$ in the geodesic equation accounts for that part of the Coriolis force induced by   $(\frac{2}{{\rm Ro}} + 2 h)z$ in \eqref{eq-SWQG}. 
We recall that for $(u,a),(v,b)\in \cg\rtimes_\Omega\mathbb{R}$, the Lie bracket is
\begin{equation*}
\big[  (u,a),(v,b)\big]=\big(  \Ss_\theta\{f,g\}  ,  \Omega(u,v)  \big).
\end{equation*}
By construction, the element with the form $(0,a)\in\hat{\cg}$ belongs to the center of the Lie algebra $\hat{\cg}$.
The inner product on $\hat\cg$ is given by
\begin{equation}\label{metric hat S3}
\llangle  (u,a),(v,b)  \rrangle^{\mathcal{\A}} =  \langle u , v \rangle^{\mathcal{\A}} + ab.
\end{equation}
Now, we will find an  operator  $T:\cg\longrightarrow \cg$ defined by the relation 
\[
\langle Tu,v \rangle^{\mathcal{\A}}=\Omega(u,v).
\]
%
%%%%%%%%%%%%%%%%%%%%%%%%%%%%%%%%%%%%%%%%%%             Lemma  operator   T        %%%%%%%%%%%%%%%%%%%%%%%%%%%%%%%%%%%%%%%%%%%%
%
\begin{Lem}
For any $u=\Ss_\theta f,v=\Ss_\theta g\in\cg$ the operator 
\[
T:\cg\longrightarrow \cg\quad;\quad u\longmapsto \Ss_{\theta}(\Delta_{\theta,\mathcal{A}})^{-1}\{\varphi,f\}
\]
satisfies $\Omega(u,v)=\langle Tu,v\rangle_{\mathcal{A}}$.
\end{Lem}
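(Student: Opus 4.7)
The plan is to compute $\Omega(u,v)$ directly and massage it into the shape of the weak inner product $\langle\cdot,\cdot\rangle^{\mathcal{A}}$, reading off $T$ from the result. The two ingredients I would rely on are the identity $\langle \Ss_\theta f, \Ss_\theta g\rangle^{\mathcal{A}} = \int_{\Sph^3} f\,\Delta_{\theta,\mathcal{A}} g\,d\mu$, which the paper establishes when identifying the contact Laplacian as $\Ss^*_{\theta,\mathcal{A}}\Ss_\theta$, and the invertibility of $\Delta_{\theta,\mathcal{A}}$ on $E$-invariant Sobolev functions shown in the preceding proposition.

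The first move is a single integration by parts inside $\Omega$. Since $X_\varphi$ is divergence-free with respect to $d\mu$, one obtains $\int_{\Sph^3}\varphi\{f,g\}\,d\mu = -\int_{\Sph^3}\{f,\varphi\}\,g\,d\mu = \int_{\Sph^3}g\,\{\varphi,f\}\,d\mu$. The second move is to set $h:=(\Delta_{\theta,\mathcal{A}})^{-1}\{\varphi,f\}$ and use the formal self-adjointness of $\Delta_{\theta,\mathcal{A}}$ to rewrite
\begin{equation*}
\int_{\Sph^3}g\,\{\varphi,f\}\,d\mu = \int_{\Sph^3}g\,\Delta_{\theta,\mathcal{A}}h\,d\mu = \int_{\Sph^3}h\,\Delta_{\theta,\mathcal{A}}g\,d\mu = \langle \Ss_\theta h,\Ss_\theta g\rangle^{\mathcal{A}}.
\end{equation*}
Reading off, $Tu = \Ss_\theta h = \Ss_\theta(\Delta_{\theta,\mathcal{A}})^{-1}\{\varphi,f\}$, as claimed.

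The only thing I would check on the side is that $\{\varphi,f\}$ actually lies in the domain of $(\Delta_{\theta,\mathcal{A}})^{-1}$, i.e.\ in $H^{s-1}_E(\Sph^3,\R)$. Sobolev regularity is automatic, since the Poisson bracket loses only one derivative off $f\in H^{s+1}_E$ and $\varphi$ is fixed and smooth enough. The $E$-invariance follows because $E$ is a derivation with $E\varphi = Ef = 0$ (recall that $\varphi$ in the model is a lift of the Coriolis-type function on $\Sph^2$), so $E\{\varphi,f\} = \{E\varphi,f\} + \{\varphi,Ef\} = 0$. I do not anticipate any serious obstacle: the lemma is essentially a bookkeeping identity that dualises the cocycle $\Omega$ through the isomorphism $\Delta_{\theta,\mathcal{A}}$, and the only point to be cautious about is the sign convention in the Poisson-bracket integration by parts.
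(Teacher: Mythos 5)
Your proposal is correct and follows essentially the same route as the paper: one integration by parts using that $\Ss_\theta f$ is divergence-free to move the bracket onto $g$, then insertion of $\Delta_{\theta,\mathcal{A}}\Delta_{\theta,\mathcal{A}}^{-1}$ and the identity $\int_{\Sph^3}(\Delta_{\theta,\mathcal{A}}h)\,g\,d\mu=\langle \Ss_\theta h,\Ss_\theta g\rangle^{\mathcal{A}}$ to read off $T$. Your side remark on the $E$-invariance and Sobolev regularity of $\{\varphi,f\}$ is a sensible addition that the paper leaves implicit.
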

\begin{proof}   
Using the fact that 
\[
\int_{\Sph^3}\varphi\{f,g\}d\mu = \int_{\Sph^3} \varphi (\Ss_\theta f)(g)  d\mu = -\int_{\Sph^3} g (\Ss_\theta f)(\varphi)  d\mu 
= \int_{\Sph^3}  \{\varphi,f\} g d\mu 
\]
we have
\begin{eqnarray*}
\Omega(u,v)   &=&   \int_{\Sph^3}  \{\varphi,f\} g d\mu  \\
&=&   \int_{\Sph^3} \Big(\Delta_{\theta,\mathcal{A}}  \Delta_{\theta,\mathcal{A}}^{-1}\{\varphi,f\}\Big)  g  d\mu\\
&=&   \int_{\Sph^3}  g^{\Sph^3} \big( \mathcal{A}\Ss_\theta\Delta_{\theta,\mathcal{A}}^{-1}\{\varphi,f\}~,~\Ss_\theta g \big) d\mu\\
&=&  \langle \Ss_\theta\Delta_{\theta,\mathcal{A}}^{-1}\{\varphi,f\} , \Ss_\theta g\rangle_{\mathcal{A}}
\end{eqnarray*}
which completes the proof.
\end{proof}
%
%%%%%%%%%%%%%%%%%%%%%%%%%%%%%%%%%%%%%%%%%%            end Lemma  operator   T        %%%%%%%%%%%%%%%%%%%%%%%%%%%%%%%%%%%%%%%%%%%%
%

% \begin{Rem}
% Note that $\cg$ is a 1-dimensional central extension of $\tilde\cg$ with a trivial cocycle zero. In the other words $\cg=\tilde\cg\rtimes_{\Omega_0} \R$ where 
% \[
% \Omega_0:\tilde\cg\times\tilde\cg\longrightarrow\R
% \]
% is the zero cocycle. Since $\Omega$ and $\Omega_0$ are in the same cohomolgy class (the cocycle \eqref{Cocycle Omega on S3} is a coboundary), they induce isomorphic Lie algebra structure. 
% As a result the Lie groups are isomorphic as well. \textbf{I am not sure yet!.} 
% {\color{red}KM: I guess this might not be true globally. Also, infinite dimension makes it a bit cumbersome. I suggest that we remove this remark.}  {\color{blue}Ali: I agree to remove this remark completely. I hoped to be able to prove it rigorously at some point ...}  
% \end{Rem}

%
%%%%%%%%%%%%%%%%%%%%%%%%%%%%%%%%%%%%%%%%%%          full  geodesic equation        %%%%%%%%%%%%%%%%%%%%%%%%%%%%%%%%%%%%%%%%%%%%
%
\subsection*{The geodesic equation on $\widehat{G}=\widehat{\D_{q} (\Sph^3})$.   } 
Suppose the Lie group $\widehat{G}$ exists and is the 1-dimensional central extension of ${\D_{q} (\Sph^3})$, so that $T_e\widehat{G}=\hat\cg=\cg\rtimes_{\Omega}\R$.  
Now we extract the Euler-Arnold (geodesic) equation of the group $\widehat{G}$ with the (weak) metric \eqref{metric hat S3}. 
For the coadjoint operator on $\hat\cg$, using \cite[Sec.~2.1]{Suri-2023} we have
\[
\widehat{\ad}_{(u,a)}^*:\hat{\cg}\longrightarrow \hat{\cg}; \quad (v,b)\longmapsto (\ad^*_uv  - b Tu,0).
\]
Moreover, for the curve $(u,a):(-\epsilon,\epsilon)\longrightarrow \hat{\cg}$  the corresponding Euler-Arnold  equation on $(\widehat{G},\llangle  \;,\;  \rrangle)$ is
\begin{equation}\label{E-A-Eq hat G}
\left\{ \begin{array}{ll}  \partial_tu  +  \ad^*_uu  -  a(t)Tu  =  0   \\
\partial_ta(t)=0 .
\end{array}\right.
\end{equation}
The second equation implies that $a(t)=a$ is constant. However, for arbitrary  $u=\Ss_\theta f,v=\Ss_\theta g,w =\Ss_\theta h\in\cg$ we have
\begin{eqnarray*}
\langle \ad^*_uv,w\rangle^\mathcal{A}   &=& \langle v , \ad_uw\rangle^\mathcal{A}  \\
&=&  - \int_{\Sph^3} g^{\Sph^3} \big( \mathcal{A}\Ss_\theta g , \Ss_\theta\{f,h\} \big) d\mu  \\
&=&  - \int_{\Sph^3}  ( \Delta_{\theta,\mathcal{A}} g) \{f,h\}  d\mu  \\
&=&   \int_{\Sph^3}  \{f , \Delta_{\theta,\mathcal{A}} g\} h  d\mu  \\
&=&   \int_{\Sph^3} \Big(\Delta_{\theta,\mathcal{A}}  \Delta_{\theta,\mathcal{A}}^{-1}\{f , \Delta_{\theta,\mathcal{A}} g\}\Big)  h  d\mu\\
&=&   \int_{\Sph^3}  g^{\Sph^3} \big( \mathcal{A}\Ss_\theta\Delta_{\theta,\mathcal{A}}^{-1}\{f , \Delta_{\theta,\mathcal{A}} g\}~,~\Ss_\theta h \big) d\mu\\
&=&  \langle \Ss_\theta\Delta_{\theta,\mathcal{A}}^{-1}\{f , \Delta_{\theta,\mathcal{A}} g\} , \Ss_\theta h\rangle_{\mathcal{A}}.
\end{eqnarray*}
As a result, the Euler-Arnold equation \eqref{E-A-Eq hat G} takes the form
\begin{eqnarray*}
0&=& \partial_t\Ss_\theta f +     \Ss_\theta\Delta_{\theta,\mathcal{A}}^{-1}\{f , \Delta_{\theta,\mathcal{A}} f\} -a \Ss_\theta\Delta_{\theta,\mathcal{A}}^{-1}\{\varphi,f\} \\
&=& \Ss_\theta \Delta_{\theta,\mathcal{A}}^{-1} \Big(  \partial_t\Delta_{\theta,\mathcal{A}}f + \{f , \Delta_{\theta,\mathcal{A}} f\} -a \{\varphi,f\} \Big) 
\end{eqnarray*}
or equivalently
\begin{equation}\label{eq-SWQG-S3}
\partial_t\Delta_{\theta,\mathcal{A}}f + \{f , \Delta_{\theta,\mathcal{A}} f\} -a \{\varphi,f\}=0.
\end{equation}
\begin{Rem}\label{Rem phi}
Suppose that $\varphi$ is $E$-invariant. More precisely, for a given differentiable function $h:\Sph^2\longrightarrow \R$, we consider the lift of the map
\begin{equation}\label{Coriolis parameter}
\varphi:\Sph^2\longrightarrow \R\quad;\quad (z,\lambda)\longmapsto \frac{2z}{{\rm Ro}} + 2z h(z,\lambda)
\end{equation}
and  $\rho^2 = \gamma z^2$ where $\gamma$ and $\rm Ro$ are as described in \eqref{eq-SWQG}. 
Denote the lift of this function to $\Sph^3$ by $\varphi$. Then, for $a=1$, equation \eqref{eq-SWQG-S3} reduces to the equation on $\Sph^2$ given by 
\begin{equation}\label{eq-GQG-S2}
\partial_t(\gamma z^2-\Delta)f + \{f \:,\: (\gamma z^2-\Delta) f + \frac{2z}{{\rm Ro}} + 2z h \}=0 .
\end{equation}
% on $\Sph^2$.

For this Euler-Arnold equation, the transported quantity is given by $\hat{m}(t) = (\gamma z^2 - \Delta) f + \frac{2z}{{\rm Ro}} + 2z h$. Moreover, any uniform bound on $\hat{m}$ provides a uniform bound for the quantity $m$ in Theorem \eqref{Global existence}, and vice versa. Consequently, the global existence theorem applies in a similar manner to $\hat{m}$ as well.
\end{Rem}

%
%%%%%%%%%%%%%%%%%%%                  \section{Misio{\l }ek criterion                  %%%%%%%%%%%%%%%%%%%%%%%%%%%%%
%
\section{Curvature and Misio{\l }ek's criterion}\label{sec:curvature}
Let  $G$ be a manifold (possibly infinite-dimensional in the Banach category) which is also a topological group such that right translation is smooth (sometimes called a \emph{half-Lie group}). 
Moreover, suppose that $G$ is equipped with a  right-invariant (weak) metric defined by an inner product $\llangle \;,\;\rrangle$ on $\cg$. 
Let $[\;,\;]$ denote the Lie bracket on $\cg:=T_eG$. 
The \emph{Misio{\l }ek criterion} (also called \emph{Misio{\l }ek curvature}, cf.~\cite{Tauchi-Yoneda-2}) is given by
\begin{equation}\label{eq:MC_general}
\begin{aligned}
    {MC}\big( u,v \big)   
    &=  - \llangle  [u,v] , [u,v]\rrangle   -   \llangle  [[u,v],  v ],u \rrangle\\
    &=   \llangle   ad_u^*[u,v]    +       ad_{[u,v]}^*u   ,  v\rrangle .
\end{aligned}
\end{equation}
Its significance is the following: if $u$ is a stationary solution of the Euler--Arnold equation associated with $(G,\llangle \;,\;\rrangle)$ and there exists $v\in \cg$ such that $MC(u,v)>0$, then the sectional curvature in the plane spanned by $u$ and $v$ is positive and there exists a conjugate point for the geodesic curve corresponding to $u$.
For details, see \cite{Misiolek96,Tauchi-Yoneda-2}.

Suppose $M$ is a closed Riemannian manifold with volume form $\vol$. 
Then, for $G=\cD^s_{\vol}(M)$ equipped with the right-invariant $L^2$ metric and $u,v\in T_e\cD^s_{\vol}(M)$, the formula \eqref{eq:MC_general} reduces to the original criterion by Misio{\l }ek~\cite{Misiolek96}
\begin{equation*}
MC_{\vol}(u,v):=  \langle  \nabla_{u}[u,v] + \nabla_{[u,v]}u , v \rangle_{L^2} .
\end{equation*}

In the case where \( M \) is a 2-dimensional manifold and \( u = \nabla^\perp f \), \( v = \nabla^\perp g \) for smooth functions \( f, g \in C^\infty(M) \), the criterion simplifies to
\begin{equation}\label{MC original}
MC_{\vol}(u,v)=\langle \Delta \{f,g\}  ,  \{f,g\}  \rangle_{L^2}   - \langle  \{ \Delta f , g\}  ,  \{f,g\}  \rangle_{L^2}.    
\end{equation}
In the next Lemma, we compute Misio{\l }ek's criterion for $\big(\cD^s_q(\Sph^3)\:,\: \langle\:,\:\rangle_{\mathcal{A}}\big)$ and its reduced version on $\cD^s_\nu(\Sph^2)$.
%
%%%%%%%%%%%%%%%%%%%%%%%%%%%%%%%%%%%%%%%%%%        lemma MC  with stream function  %%%%%%%%%%%%%%%%%%%%%%%%%%%%%%%%%
%
%
\begin{Lem}\label{lemma MC on Dq}
For   $u=S_\theta f$ and $v=S_\theta g$ we have the following results. \\
\textbf{(a)} Misio{\l }ek's criterion is given by
\begin{eqnarray}\label{eqn MC S3}
{MC}^\mathcal{A}\big( u,v\big) &=&   - \langle \Delta_{\theta,\mathcal{A}} \{f,g\}  ,  \{f,g\}  \rangle_{L^2}   -  \langle  \{g,\Delta_{\theta,\mathcal{A}} f\}  ,  \{f,g\}  \rangle_{L^2} . 
\end{eqnarray}
\textbf{(b)} Using the projection $\Pi$ introduced in \eqref{Projection}, the above criterion reduces to
\begin{eqnarray}\label{eqn Misiolek Curvature on S2}
{MC}^{\mathcal{A}}\big( u,v \big)  &=&  \gamma\langle f\{z^2,g\},\{f,g\}\rangle_{L^2} + MC_\nu(u,v).
\end{eqnarray}
% where $\langle f\{z^2,g\},\{f,g\}\rangle_{L^2} = \int_{\Sph^2} f\{z^2,g\}{\{f,g\}}d\nu$.
\end{Lem}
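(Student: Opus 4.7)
The plan is to compute $MC^{\mathcal{A}}(u,v)$ directly from the general formula \eqref{eq:MC_general}, using two facts made explicit earlier in the paper. First, the Lie algebra bracket on $\cg = T_{\id}\cD_q^s(\Sph^3)$ takes the form
\[
[S_\theta f, S_\theta g]_{\cg} = -S_\theta\{f,g\},
\]
which can be read off from the computation of $\ad_u w$ in Section~\ref{sec:central extension}. Second, the weak inner product is the $L^2$-adjoint pairing for the contact Laplacian, namely $\llangle S_\theta\varphi, S_\theta\psi\rrangle^{\mathcal{A}} = \int_{\Sph^3}\varphi\,\Delta_{\theta,\mathcal{A}}\psi\,d\mu$. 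Together these two identities reduce the problem to integrals of Poisson brackets against $\Delta_{\theta,\mathcal{A}}$.

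For part (a), I would substitute $[u,v] = -S_\theta\{f,g\}$ and $[[u,v],v] = S_\theta\{\{f,g\},g\}$ into \eqref{eq:MC_general} to obtain
\[
MC^{\mathcal{A}}(u,v) = -\int_{\Sph^3}\{f,g\}\,\Delta_{\theta,\mathcal{A}}\{f,g\}\,d\mu \;-\; \int_{\Sph^3}\{\{f,g\},g\}\,\Delta_{\theta,\mathcal{A}}f\,d\mu.
\]
The first term is already in the desired form. For the second I would invoke the cyclic identity $\int\{h_1,h_2\}h_3\,d\mu = \int\{h_2,h_3\}h_1\,d\mu$, valid because $S_\theta h_1$ is divergence-free, applied with $h_1=\{f,g\}$, $h_2=g$, $h_3=\Delta_{\theta,\mathcal{A}}f$. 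This rewrites the second term as $-\langle\{g,\Delta_{\theta,\mathcal{A}}f\},\{f,g\}\rangle_{L^2}$, giving \eqref{eqn MC S3}.

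For part (b), I would expand $\Delta_{\theta,\mathcal{A}} = \gamma z^2 - \Delta$ in the expression from (a). The Laplacian pieces assemble into $\langle\Delta\{f,g\},\{f,g\}\rangle_{L^2} - \langle\{\Delta f,g\},\{f,g\}\rangle_{L^2}$, which upon using antisymmetry $\{g,\Delta f\} = -\{\Delta f,g\}$ and the reduction of $\Sph^3$-integrals to $\Sph^2$-integrals is exactly $MC_\nu(u,v)$. For the $\gamma z^2$ contributions I would apply the Leibniz rule $\{g, z^2 f\} = \{g,z^2\}f + z^2\{g,f\}$ to the second summand: the resulting piece $z^2\{g,f\} = -z^2\{f,g\}$ cancels precisely the $-\gamma\langle z^2\{f,g\},\{f,g\}\rangle_{L^2}$ coming from $\rho^2$ in the first summand, leaving only $\gamma\langle f\{z^2,g\},\{f,g\}\rangle_{L^2}$. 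This is exactly \eqref{eqn Misiolek Curvature on S2}.

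The computation is essentially algebraic bookkeeping with no analytic subtleties; the only genuinely interesting step is the telescoping cancellation of the two $z^2\{f,g\}$ terms, which is precisely why only the non-symmetric correction $\gamma\langle f\{z^2,g\},\{f,g\}\rangle_{L^2}$ survives alongside the standard $MC_\nu$. The main source of potential error is the sign convention for the Lie algebra bracket, so I would double-check it against the derivation of $\ad_u$ in Section~\ref{sec:central extension} before assembling the final argument.
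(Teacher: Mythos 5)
Your proposal is correct. For part (b) it is essentially identical to the paper's own argument: expand $\Delta_{\theta,\mathcal{A}}=\gamma z^2-\Delta$, apply the Leibniz rule to $\{g,z^2f\}$ so that the two $\gamma\langle z^2\{f,g\},\{f,g\}\rangle$ terms cancel, and recognize the remaining Laplacian terms as $MC_\nu(u,v)$ via \eqref{MC original}. For part (a) you actually do more than the paper, which only cites Lemma~2.4 of \cite{Suri-CPS}; your direct substitution of the bracket into \eqref{eq:MC_general}, followed by the self-adjointness of $\Delta_{\theta,\mathcal{A}}$ and the cyclic identity $\int\{h_1,h_2\}h_3\,d\mu=\int\{h_2,h_3\}h_1\,d\mu$ (valid since $\Ss_\theta h_1$ is divergence-free), is a clean self-contained derivation of \eqref{eqn MC S3}. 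One remark on the point you flag yourself: the paper's convention in Section~\ref{sec:central extension} is $[\Ss_\theta f,\Ss_\theta g]=+\Ss_\theta\{f,g\}$ with $\ad_u w=-\Ss_\theta\{f,h\}$, opposite to the sign you quote; however, this is harmless here because $[u,v]$ enters the first term of \eqref{eq:MC_general} quadratically and the second term through the doubly iterated bracket $[[u,v],v]$, so the overall expression is invariant under flipping the bracket sign. The only cosmetic omission is the factor $2\pi$ from reducing $\Sph^3$-integrals to $\Sph^2$-integrals (Lemma~2.7 of the paper), which the paper itself suppresses and which is irrelevant for the sign of the criterion.
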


\begin{proof}
The proof of part \textbf{a} is similar to that of Lemma 2.4 in \cite{Suri-CPS}. For the second part, we observe that  
\begin{eqnarray*}
{MC}^\mathcal{A}\big( u,v\big) &=&   - \langle (\gamma z^2-\Delta) \{f,g\}  ,  \{f,g\}  \rangle_{L^2}   -  \langle  \{g,(\gamma z^2-\Delta) f\}  ,  \{f,g\}  \rangle_{L^2}\\
&=&   - \gamma\langle  z^2 \{f,g\}  ,  \{f,g\}  \rangle_{L^2}      +     \langle \Delta \{f,g\}  ,  \{f,g\}  \rangle_{L^2}    \\
&&  +  \gamma \langle  \{ z^2 f , g\}  ,  \{f,g\}  \rangle_{L^2}   - \langle  \{ \Delta f , g\}  ,  \{f,g\}  \rangle_{L^2}\\
&=&   - \gamma\langle  z^2 \{f,g\}  ,  \{f,g\}  \rangle_{L^2}      +     \langle \Delta \{f,g\}  ,  \{f,g\}  \rangle_{L^2}    \\
&&  +  \gamma \langle  z^2\{  f , g\}  ,  \{f,g\}  \rangle_{L^2}   +  \gamma \langle  f\{ z^2  , g\}  ,  \{f,g\}  \rangle_{L^2} \\ && - \langle  \{ \Delta f , g\}  ,  \{f,g\}  \rangle_{L^2}\\
&=&  \gamma \langle  f\{ z^2  , g\}  ,  \{f,g\}  \rangle_{L^2}      +     \langle \Delta \{f,g\}  ,  \{f,g\}  \rangle_{L^2} \\ &&  - \langle  \{ \Delta f , g\}  ,  \{f,g\}  \rangle_{L^2}\\
&=&  \gamma \langle  f\{ z^2  , g\}  ,  \{f,g\}  \rangle_{L^2}      +    MC_\nu(u,v),
\end{eqnarray*}
where the last equality follows from the formula~\eqref{MC original}.
\end{proof}

We shall now use the result in Lemma~\ref{lemma MC on Dq} to investigate conjugate points near the \emph{trade-wind current}, which is the stationary solution with vector field $u_T=\nabla^\perp T$ for the stream function $T(z):=\frac{1}{2}\sqrt{\frac{15}{8\pi}} z^2$.
In particular, along the trade-wind solution we have from Lemma~\ref{lemma MC on Dq} that for any $v = \nabla^\perp g$
\begin{eqnarray*}
    {MC}^\mathcal{A}\big( u_T,v\big) &=&     \gamma \int_{\Sph^2}  |z\{ z^2  , g\}|^2   d\mu      +    MC_\nu(u_T,v).
\end{eqnarray*}

\begin{cor}\label{Cor-TW}
% For the trade-wind current which its velocity field $u_T=\nabla^\perp T$ is the (stationary) Hamiltonian vector field of the  stream function 
% $T(z):=\frac{1}{2}\sqrt{\frac{15}{8\pi}} z^2$ the previous Lemma implies that
% %
% \begin{eqnarray*}
% {MC}^\mathcal{A}\big( u_T,v\big) &=&     \gamma \int_{\Sph^2}  |z\{ z^2  , g\}|^2   d\mu      +    MC_\nu(u_T,v).
% \end{eqnarray*}
% As a result, for the critical 
If the Lamb parameter fulfills $\gamma > \frac{-MC_\nu(u_T, v)}{\int_{\Sph^2} |z\{z^2, g\}|^2  d\nu}$, then conjugate points along the trade-wind current appear. 
Comparing this with the fact that the curvature of $u_T$, in the absence of the Coriolis force, is non-positive in most directions \cite{Luk, Suri-2023}, we observe that for a suitable choice of the parameter $\gamma$, the Misio{\l }ek criterion $MC^{\mathcal{A}}(u_T,v)$  (and consequently the sectional curvature) in any non-zonal direction $v = \nabla^\perp g$ is positive.
\end{cor}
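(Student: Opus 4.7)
The plan is to apply Lemma~\ref{lemma MC on Dq}(b) with the stationary stream function $T$, read off the sign of $MC^{\mathcal{A}}(u_T, v)$, and then invoke the general Misio\l ek principle recalled just before that lemma to conclude the existence of conjugate points.

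First, I would verify that $u_T = \nabla^\perp T$ is a stationary solution of the Euler--Arnold equation \eqref{Geo-eq-S2}. Since $T$ depends only on the zonal coordinate $z$ and $\rho^2 = \gamma z^2$ is itself a function of $z$, the potential vorticity $(\rho^2 - \Delta)T$ is zonal, so $\{T, (\rho^2 - \Delta)T\} = 0$ and $u_T$ is indeed stationary. The significance of Misio\l ek's criterion for $(\cD^s_q(\Sph^3), \langle\,,\,\rangle^{\mathcal{A}})$ then applies: whenever $MC^{\mathcal{A}}(u_T, v) > 0$, the sectional curvature in the plane spanned by $u_T$ and $v$ is positive and a conjugate point appears along the geodesic determined by $u_T$.

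Next, substituting $f = T \propto z^2$ into \eqref{eqn Misiolek Curvature on S2}, the cross term $\gamma \langle f\{z^2, g\}, \{f, g\}\rangle_{L^2}$ simplifies, after absorbing the normalization constant of $T$, to $\gamma \int_{\Sph^2} |z\{z^2, g\}|^2 d\nu$, yielding the decomposition already displayed before the corollary,
\[
MC^{\mathcal{A}}(u_T, v) = \gamma \int_{\Sph^2} |z\{z^2, g\}|^2 d\nu + MC_\nu(u_T, v).
\]
The first summand is manifestly non-negative, and is strictly positive whenever $v$ is non-zonal: indeed $\{z^2, g\} = 2z\{z, g\}$, and $\{z, g\}$ is the derivative of $g$ along the Hamiltonian vector field of $z$, which generates zonal rotation and therefore annihilates $g$ precisely when $g$ is a function of $z$ alone.

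The first assertion of the corollary then follows by clearing the (positive) denominator in the hypothesis $\gamma > -MC_\nu(u_T, v)/\int_{\Sph^2} |z\{z^2, g\}|^2 d\nu$ to obtain $MC^{\mathcal{A}}(u_T, v) > 0$ and appealing to the Misio\l ek principle. For the comparison statement, the non-positivity of $MC_\nu(u_T, v)$ in most directions, established in \cite{Luk, Suri-2023}, combined with the strict positivity of the Lamb contribution $\gamma \int_{\Sph^2}|z\{z^2, g\}|^2 d\nu$ in any non-zonal direction, makes clear that taking $\gamma$ sufficiently large forces $MC^{\mathcal{A}}(u_T, v) > 0$ for every fixed non-zonal $v = \nabla^\perp g$. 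The only real bookkeeping is tracking the constant coming from the normalization of $T$ and verifying that the denominator is strictly positive in the non-zonal regime; no serious analytic obstacle arises, as the corollary is essentially a direct reading of Lemma~\ref{lemma MC on Dq}(b).
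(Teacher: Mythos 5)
Your argument is correct and coincides with what the paper intends: the corollary is a direct reading of the displayed identity $MC^{\mathcal{A}}(u_T,v)=\gamma\int_{\Sph^2}|z\{z^2,g\}|^2\,d\nu+MC_\nu(u_T,v)$ obtained from Lemma~\ref{lemma MC on Dq}(b), together with the Misio{\l}ek principle stated at the start of Section~\ref{sec:curvature}. Your added checks (stationarity of $u_T$, strict positivity of the denominator via $\{z^2,g\}=2z\{z,g\}$ for non-zonal $g$, and the normalization constant of $T$) are sound and, if anything, slightly more careful than the paper's implicit treatment.
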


\paperonly{
\textbf{Question 1} Can we visualize this mathematical observation?
{\color{red}KM: I'm working with a visualization, but I have some bug that I need to fix. I suggest we submit to arXiv without the numerics.}
}
%%%%%%%%%%%%%%%%%%%%%%%%%%%%%%%%%%     begin example    %%%%%%%%%%%%%%%%%%%%%%%%%%%%%%%
\begin{example}\label{Examp}
For the stream function  
\[
g(z,\lambda)=P^2_3(z)\cos(2\lambda)=15z(1-z^2)\cos(2\lambda)
\]
we obtain
\begin{eqnarray*}
&& \{z^2,g(z)\} =-60z^2(1-z^2)\sin(2\lambda)\\
&& \Delta\{z^2,g(z)\}=-60(2-22z^2+20z^4)\sin(2\lambda)\\
&&\{\Delta z^2,g(z)\} =-60 (-6) z^2(1-z^2)\sin(2\lambda)\\
&&MC(u_T,u_g)=60^2\pi\int_{-1}^{+1}(z^2-z^4)(2-22z^2+20z^4) +6z^4(1-z^2)^2 dz = -60^2\pi 0.4063\\
&&\int_{\Sph^2}z^2\{z^2,g\}^2d\nu=60^2\pi 0.02309
\end{eqnarray*}
As a result, for 
\begin{equation*}
    \gamma>\frac{-MC_\nu(u_T, u_g)}{\int_{\Sph^2} |z\{z^2, g\}|^2  d\nu} =\frac{0.4063}{0.02309} =17.6
\end{equation*}
conjugate points along the geodesic generated by the trade-wind current appear if we perturb the initial velocity field by $u_g$.
%In this example we consider the GQG in the absence of the Coriolis parameter that is equation
%\begin{equation}\label{eq-GQG-S3}
%\partial_t(\gamma z^2-\Delta)f + \{f \:,\: (\gamma z^2-\Delta) f  \}=0
%\end{equation}
%The same result holds true for 
%\[
%g(z,\lambda)=P^2_3(z)\sin(2\lambda)=15z(1-z^2)\sin(2\lambda)
%\]
\end{example}
%%%%%%%%%%%%%%%%%%%%%%%%%%%%%%%%%%     end example    %%%%%%%%%%%%%%%%%%%%%%%%%%%%%%%

Our developments concerning curvature and conjugate points has so far been without the Coriolis force.
We now give a corresponding result on the central extension group in section~\ref{sec:central extension}, thus allowing non-vanishing Coriolis forces.

\begin{prop} For $(u,a),(v,b)\in \hat\cg$ and $\varphi$ as in Remark \ref{Rem phi}, we have
\begin{equation}
\widehat{MC}^\mathcal{A}\big( (u,a),(v,b) \big) =  {MC}^\mathcal{A}\big( u,v\big) - (\langle\{\varphi,f\} , g\rangle)^2 
-a \langle\{\varphi,\{f,g\}\} , g\rangle.
\end{equation}
\end{prop}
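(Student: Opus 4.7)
The plan is to unpack the definition \eqref{eq:MC_general} directly on $(\hat\cg,\llangle\cdot,\cdot\rrangle^{\mathcal{A}})$, using the bracket $[(u,a),(v,b)] = (S_\theta\{f,g\},\Omega(u,v))$ from section~\ref{sec:central extension} and the product metric \eqref{metric hat S3}. The expression splits into a $\cg$-valued part that reassembles ${MC}^{\mathcal{A}}(u,v)$ and two scalar contributions that produce the two new terms. No analysis beyond the integration-by-parts identity $\int_{\Sph^3}\varphi\{f,g\}\,d\mu = \int_{\Sph^3}\{\varphi,f\}\,g\,d\mu$ (already exploited in the lemma defining $T$) should be required.

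First I would abbreviate $w := S_\theta\{f,g\} = [u,v]_{\cg}$ and $c := \Omega(u,v)$, so that $[(u,a),(v,b)] = (w,c)$. The integration-by-parts identity gives $c = \langle\{\varphi,f\},g\rangle$, and the product metric immediately yields
\[
-\llangle[(u,a),(v,b)],[(u,a),(v,b)]\rrangle^{\mathcal{A}} = -\langle w,w\rangle^{\mathcal{A}} - c^2,
\]
accounting both for the norm-squared piece of ${MC}^{\mathcal{A}}(u,v)$ and for the new term $-(\langle\{\varphi,f\},g\rangle)^2$.

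Next I would compute the iterated bracket
\[
[[(u,a),(v,b)],(v,b)] = [(w,c),(v,b)] = ([w,v]_{\cg},\Omega(w,v)),
\]
observing that the scalars $c$ and $b$ drop out because $\{0\}\times\R$ is central in $\hat\cg$. A second application of the same identity, now with $\{f,g\}$ in place of $f$, gives $\Omega(w,v) = \int_{\Sph^3}\{\varphi,\{f,g\}\}\,g\,d\mu = \langle\{\varphi,\{f,g\}\},g\rangle$. Pairing against $(u,a)$ under the product metric produces $\langle[w,v]_{\cg},u\rangle^{\mathcal{A}} + a\langle\{\varphi,\{f,g\}\},g\rangle$, which—together with the overall minus sign and the earlier $-\langle w,w\rangle^{\mathcal{A}}$—reconstructs ${MC}^{\mathcal{A}}(u,v)$ and leaves exactly the remaining claimed term $-a\langle\{\varphi,\{f,g\}\},g\rangle$.

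I do not anticipate a real obstacle; the argument is pure bookkeeping of the semidirect-product structure. The only subtlety worth flagging is that the cocycle scalar $c$ generated by the inner bracket is annihilated by the outer $\mathrm{ad}$ (since $\{0\}\times\R$ is central), so it contributes to $\widehat{MC}^{\mathcal{A}}$ only through its square in the first term and \emph{not} again through the iterated bracket, whereas the second scalar contribution $a\Omega(w,v)$ is a fresh evaluation of the cocycle on $w=S_\theta\{f,g\}$ and $v$.
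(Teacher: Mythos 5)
Your proof is correct. The only difference from the paper is which of the two equal expressions in \eqref{eq:MC_general} you expand: you work with the bracket form $-\llangle[\cdot,\cdot],[\cdot,\cdot]\rrangle-\llangle[[\cdot,\cdot],\cdot],\cdot\rrangle$, needing only the Lie bracket on $\hat\cg$, the product metric \eqref{metric hat S3}, and the integration-by-parts identity for the cocycle; the paper instead expands the coadjoint form, invoking the formula $\widehat{\ad}^*_{(u,a)}(v,b)=(\ad^*_uv-bTu,0)$ and the operator $T$ from the preceding lemma. The two routes are equivalent by definition, and the bookkeeping matches term by term: your $-c^2$ from the norm of the inner bracket corresponds to the paper's $-d\langle Tu,v\rangle^{\mathcal A}=-d\,\Omega(u,v)=-d^2$, and your $-a\,\Omega(w,v)$ from the iterated bracket corresponds to the paper's $-a\langle Tw,v\rangle^{\mathcal A}$. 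Your version is arguably slightly more self-contained (it does not rely on the coadjoint formula quoted from \cite{Suri-2023}), while the paper's version reuses the machinery already set up for deriving the Euler--Arnold equation \eqref{E-A-Eq hat G}. Your closing observation about the centrality of $\{0\}\times\R$ killing the scalar $c$ in the outer bracket is exactly the right subtlety to flag.
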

\begin{proof}
Set $(w,d):=\big[(u,a),(v,b)\big].$ Then we get $ w= [u,v]$ and  $d=\Omega(u,v)=\langle \{\varphi,f\},g\}$. As a result, 
\begin{align*}
 \widehat{MC}^\mathcal{A}\big( (u,a),(v,b) \big) &= \llangle \widehat{\ad}^*_{(u,a)}(w,d) + \widehat{\ad}^*_{(w,d)}(u,a) ,(v,b)\rrangle^\mathcal{A}\\
& = \llangle (\ad^*_{u}w + \ad^*_wu -dTu-aTw ,0) ,(v,b)\rrangle^\mathcal{A}\\
& = \langle \ad^*_{u}w + \ad^*_wu -dTu-aTw  ,v\rangle^\mathcal{A}\\
& = \langle \ad^*_{u}w + \ad^*_wu ,v\rangle^\mathcal{A}  -d \langle Tu,v\rangle^\mathcal{A} -a \langle Tw  ,v\rangle^\mathcal{A}\\
& = {MC}^\mathcal{A}\big( u,v\big)  -d \langle \{\varphi,f\},g\rangle -a \langle \{\varphi,\{f,g\}\},g  \rangle\\
& = {MC}^\mathcal{A}\big( u,v\big)  -( \langle \{\varphi,f\},g\rangle)^2 -a \langle \{\varphi,\{f,g\}\},g \rangle .
\end{align*}
\end{proof}

In particular, for the trade-wind solution we have the following result.

%%%%%%%%%%%%%%%%%%%%%%%             Corollary       %%%%%%%%%%%%%%%%%%%%%%%%%%%%%%%%%%%
\begin{cor}\label{Cor-Tw-Coriolis}
Let $\varphi$  be given by equation \eqref{Coriolis parameter} with $h=h(z)$. Then, the steam function $T(z)$ of the trade wind velocity field $u_T$  is a solution of the equation \eqref{eq-GQG-S2}. Using Corollary \ref{Cor-TW} and the previous proposition we get
\begin{eqnarray*}
\widehat{MC}^\mathcal{A}\big( (u_T,a),(v,b) \big) &=& \gamma \int_{\Sph^2}  |z\{ z^2  , g\}|^2   d\mu      +    MC_\nu(u_T,v) 
\\
&&- a \langle \{\varphi,\{T(z),g\}\},g \rangle.
\end{eqnarray*}
Notably, for suitable choices of $\gamma$ and $a$, we can always get a positive value of $\widehat{MC}^\mathcal{A}\big( (u_T,a),(v,b) \big)$. %\textbf{What about $h$?}
\end{cor}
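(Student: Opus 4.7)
The plan is to verify the two claims of the corollary separately: first that $T(z)$ is a stationary solution of \eqref{eq-GQG-S2}, and second that the displayed formula for $\widehat{MC}^\mathcal{A}$ is just the previous proposition with the middle term vanishing.

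For the stationarity claim, I would exploit the fact that $T$, $h$, $\gamma z^2$, and (by axial symmetry) $\Delta T$ are all functions of $z$ only. Hence the advected quantity $(\gamma z^2 - \Delta) T + \frac{2z}{\rm Ro} + 2z h(z)$ is purely zonal. Since the Poisson bracket $\{f(z), g(z)\}$ of two zonal functions on $\Sph^2$ vanishes (the Hamiltonian vector field $\nabla^\perp f(z)$ flows along lines of constant $z$, which are level sets of $g(z)$), the bracket in \eqref{eq-GQG-S2} is zero, and $\partial_t$ of a time-independent function is zero. Thus the equation is satisfied.

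For the curvature computation, I would substitute $u = u_T$, $f = T(z)$ into the previous proposition:
\[
\widehat{MC}^\mathcal{A}\big((u_T,a),(v,b)\big) = MC^\mathcal{A}(u_T,v) - \big(\langle\{\varphi,T\},g\rangle\big)^2 - a\,\langle\{\varphi,\{T,g\}\},g\rangle.
\]
The middle term drops out because $\varphi = \frac{2z}{\rm Ro} + 2z h(z)$ and $T = T(z)$ are both zonal, so $\{\varphi,T\} = 0$ by the same argument as above. Corollary \ref{Cor-TW} then gives
\[
MC^\mathcal{A}(u_T,v) = \gamma \int_{\Sph^2} |z\{z^2,g\}|^2\, d\nu + MC_\nu(u_T,v),
\]
yielding exactly the formula stated.

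For the final assertion about positivity, the argument is essentially a size comparison. The $\gamma$-term is non-negative and, as noted in Example~\ref{Examp} and Corollary~\ref{Cor-TW}, is strictly positive whenever $g$ is not zonal (so that $\{z^2,g\} \not\equiv 0$). The terms $MC_\nu(u_T,v)$ and $\langle\{\varphi,\{T,g\}\},g\rangle$ are fixed linear/bilinear expressions in $g$, independent of $\gamma$ and (for the first) of $a$. Therefore, for any non-zonal $v$, one can first choose $a \in \R$ so that $-a\langle\{\varphi,\{T,g\}\},g\rangle \geq 0$ (taking $a=0$ if the pairing vanishes, otherwise choosing the sign of $a$ appropriately, including $a=0$ if one wishes), and then take $\gamma$ large enough that the $\gamma$-term dominates the remaining $MC_\nu(u_T,v)$ contribution. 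The main subtlety is simply to notice the vanishing of $\{\varphi,T\}$, after which the positivity claim reduces to the scalar inequality already established in Corollary~\ref{Cor-TW}.
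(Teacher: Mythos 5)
Your proposal is correct and follows essentially the same route as the paper, which derives the displayed formula by substituting $u=u_T$, $f=T(z)$ into the preceding proposition and invoking Corollary \ref{Cor-TW}; the paper silently drops the term $-(\langle\{\varphi,f\},g\rangle)^2$, and you correctly supply the reason, namely that $\{\varphi,T\}=0$ because both $\varphi$ and $T$ are zonal. Your explicit verification of stationarity of $T(z)$ and the sign/size argument for positivity are exactly the intended (but unwritten) details.
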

\begin{Rem} In light of Corollaries \ref{Cor-TW}, \ref{Cor-Tw-Coriolis}, and Example \ref{Examp}, we observe the stabilizing effect of the Lamb parameter $\gamma$  in our specific cases. It would be interesting to further explore the influence of the parameters $\gamma$ and $a$ on the behavior of  other zonal solutions, as was done numerically in \cite{FrCaCiGe2024,Luesink-2024,Franken-2024}, and for the quasi-geostrophic setting in \cite{Lee-Pre}. Additionally, the term $h$ in \eqref{eq-SWQG}, which encodes the topography, may also play a significant role in determining the curvature and the appearance of conjugate points. 
\paperonly{
{\color{blue} Ali: 
Here, we highlight some possible directions for further research based on our approach, which also connects this work more concretely to the ongoing developments in the field.}
}
\end{Rem}

%
%%%%%%%%%%%%%%%%%%%%%%%          bliography     %%%%%%%%%%%%%%%%%%%%%
%
\bibliographystyle{alpha}
\bibliography{qg_refs}

\end{document}